\documentclass[11pt,twoside,reqno]{amsart}
\usepackage{amsmath}
\usepackage{amsthm}
\usepackage{amsfonts,amssymb}
\usepackage{bbm}
\usepackage{latexsym}
\usepackage{mathrsfs}
\usepackage[all]{xy}
\usepackage{url}
\usepackage{float}
\usepackage{hyperref}
\usepackage{amssymb,yfonts}
\usepackage{fontenc}

\usepackage{mathdots}

\usepackage[boxsize=23pt]{ytableau}

\usepackage[utf8]{inputenc}
\usepackage[english]{babel}
\usepackage{lipsum}
\usepackage{amsthm}
\usepackage{todonotes}
\usepackage[vcentermath]{youngtab}

\theoremstyle{plain}
\newtheorem{lemma}{Lemma}[section]
\newtheorem{proposition}[lemma]{Proposition}
\newtheorem{theorem}[lemma]{Theorem}
\newtheorem{conjecture}[lemma]{Conjecture}

\newtheorem{corollary}[lemma]{Corollary}
\theoremstyle{remark}
\theoremstyle{problem}
\newtheorem{problem}[lemma]{Problem}
\newtheorem{remark}[lemma]{Remark}

\theoremstyle{definition}
\newtheorem{definition}[lemma]{Definition}
\newtheorem{example}[lemma]{Example}

\def\a\Si{{\rm{a}\Sigma }}
\def\w\Si{{\rm{w}\Sigma }}

\def\w {{\textrm {w}}}

\def\max{{\text{max}}}

\def\a{\mathrm{a}}

 \newcommand{\Si}{\Sigma}

\begin{document}

\title{Cohen--Macaulayness of $\mathrm{Inc}(\mathbb{N})$-Invariant Chains of Edge Ideals}
\author[I. Anwar]{Imran Anwar}
\author[M. Ghayas]{Mughees Ghayas}
\author[A. Javed]{Azhar Javed}

\address{Department of Mathematics\\
Lahore University of Management Sciences\\
Lahore, Pakistan}
\address{Department of Mathematics\\
Lahore University of Management Sciences\\
Lahore, Pakistan}
\address{Department of Mathematics\\
Lahore University of Management Sciences\\
Lahore, Pakistan}
\email{azhar.javed@lums.edu.pk}
\email{imran.anwar@lums.edu.pk}
\email{mughees.ghayas@lums.edu.pk}

\begin{abstract}
Let $(G_n)_{n\ge n_0}$ be a family of graphs on $[n]$ whose edge
ideals $I_n\subseteq R_n=k[x_1,\dots,x_n]$ form an
$\mathrm{Inc}(\mathbb{N})$-invariant chain,
$I_{n+r}=\mathrm{Inc}(\mathbb{N})_{n,n+r}(I_n)$ for $n\ge n_0,\
r\ge0$. We determine which pairs $(n,r)$ make $R_{n+r}/I_{n+r}$
Cohen--Macaulay, for five classical families: line graphs $L_n$,
complements of line graphs $L_n^c$, complete graphs $K_n$, cyclic
graphs $C_n$, and complements of cyclic graphs $C_n^c$. For line
graphs we give the generators of $I_{n+r}$, the height and Krull
dimension of $R_{n+r}/I_{n+r}$, and a complete classification: for
$n\ge5$, $R_{n+r}/I_{n+r}$ is Cohen--Macaulay if and only if
$r=n-4$. For complements of line graphs, complete graphs, and
complements of cyclic graphs, we show the chain is Cohen--Macaulay
unconditionally, for every $r\ge0$; the first and third arise from
the same underlying phenomenon, in which the
$\mathrm{Inc}(\mathbb{N})$-invariant chain reproduces the original
graph itself at each step. For cyclic graphs we prove
$\mathrm{Inc}(\mathbb{N})_{n,n+r}(I(C_n))=I(K_{n+r})$ once
$r\ge n-3$, giving Cohen--Macaulayness in this range, and conjecture
-- with supporting computational and structural evidence -- a
complete classification: Cohen--Macaulayness holds if and only if
$r\ge\lfloor(n-4)/2\rfloor$ and $r\ne n-4$.
\end{abstract}

\subjclass[2020]{{Primary: 13F55, 05E40; Secondary: 13C15, 05C25, 13D02}}

\keywords{{Cohen--Macaulay rings, edge ideals, Inc-invariant chains of ideals, 
line graphs, independence complex, shellability, monomial ideals, 
Stanley--Reisner ideals}}

\maketitle

\section{Introduction}

The study of ideals invariant under an infinite symmetry has emerged
from several independent motivations. One strand, aimed at
Noetherianity up to symmetry, grew out of problems in algebraic
statistics: Cohen \cite{cohen1967laws}, Aschenbrenner and Hillar
\cite{aschenbrenner2007finite}, Ho\c{s}ten and Sullivant
\cite{hocsten2007finiteness}, Draisma \cite{draisma2010finiteness},
Brouwer and Draisma \cite{brouwer2011equivariant}, Hillar and
Sullivant \cite{hillar2012finite}, and Draisma, Eggermont, Krone and
Leykin \cite{draismanoetherianity} showed that chains of ideals
invariant under a symmetric group or monoid action are, under mild
hypotheses, eventually generated in bounded degree; see
\cite{conca2014noetherianity} for a survey. A second strand,
representation stability, emerged from topology and combinatorics:
Church, Ellenberg and Farb \cite{church2015fi} introduced
$\mathrm{FI}$-modules to capture the stabilization of sequences of
symmetric-group representations, a phenomenon developed further by
Putman and Sam \cite{putman2017representation}.
 
Building on both strands, Nagel and R\"omer initiated a systematic
study of chains of ideals equivariant under
$\mathrm{Inc}(\mathbb{N})$, the category of finite sets with strictly
increasing maps, proving finiteness results via equivariant Hilbert
series \cite{nagel2017equivariant} and developing a theory of
$\mathrm{FI}$- and $\mathrm{OI}$-modules with varying coefficients
\cite{nagel2019fi}. Le, Nagel, Nguyen and R\"omer extended this
framework to obtain uniform bounds on codimension and projective
dimension \cite{le2020codimension} and on Castelnuovo--Mumford
regularity \cite{le2021castelnuovo} along $\mathrm{Inc}(\mathbb{N})$-invariant
chains, while Kahle, Le and R\"omer studied such invariant chains
more broadly in algebra and discrete geometry
\cite{kahle2022invariant}. Complementary structural results include
Alexander duality for symmetric simplicial complexes
\cite{almousa2022alexander} and a description of Betti tables of
monomial ideals fixed by permutations of the variables
\cite{murai2020betti}: together, this body of work shows that
homological invariants of an $\mathrm{Inc}(\mathbb{N})$-invariant
chain, though defined on polynomial rings of unbounded size, stabilize
or grow in a controlled, eventually predictable way; see
Juhnke-Kubitzke, Le and R\"omer \cite{juhnke2018asymptotic} for a
survey of this area.
 
This asymptotic perspective has recently been sharpened for the
special case of \emph{edge ideals}. Van Le and Nguyen
\cite{van2024regularity} studied the regularity and projective
dimension of invariant chains of monomial ideals; Hoang, Nguyen and
Tran \cite{hoang2024asymptotic} proved that the regularity of an
$\mathrm{Inc}(\mathbb{N})$-invariant chain of edge ideals is
eventually a linear function of $n$; Hoa, Hoang, Van Le, Nguyen
and Nguyen \cite{hoa2024asymptotic} established the analogous
asymptotic behavior for depth; and, most recently, Hoang, Koley and
Van Le \cite{hoang2026invariant} studied invariant chains of graphs
directly. Taken together, these results give a
precise picture of how depth and regularity evolve along such a
chain once $n$ is sufficiently large -- but they say nothing about
whether the quotient ring is Cohen--Macaulay, since
Cohen--Macaulayness requires depth and dimension to agree
\emph{exactly}, at every sufficiently large $n$, not merely
asymptotically.
 
This motivates the central question addressed by the present paper.
 
\begin{problem}\label{prob:main}
Let $(G_n)_{n\ge n_0}$ be a family of graphs on $[n]$, with edge
ideals $I_n\subseteq R_n=k[x_1,\dots,x_n]$ forming an
$\mathrm{Inc}(\mathbb{N})$-invariant chain, so that
$I_{n+r}=\mathrm{Inc}(\mathbb{N})_{n,n+r}(I_n)$ for all $n\ge n_0$ and
$r\ge0$. For which pairs $(n,r)$ is the quotient ring
$R_{n+r}/I_{n+r}$ Cohen--Macaulay?
\end{problem}
 
The present paper answers Problem~\ref{prob:main} for five
fundamental families of graphs: line graphs, complements of line
graphs, complete graphs, cyclic graphs, and complements of cyclic
graphs. Given a graph $G_n$ on $[n]$ with edge ideal $I_n\subseteq
R_n$, the increasing injections $\mathrm{Inc}(\mathbb{N})_{n,n+r}$
produce a chain of edge ideals $I_{n+r}\subseteq R_{n+r}$ for
$r\ge0$, and we determine precisely which pairs $(n,r)$ make
$R_{n+r}/I_{n+r}$ Cohen--Macaulay. For line graphs, we obtain a
complete classification: for $n\ge5$, the quotient
$R_{n+r}/I_{n+r}$ is Cohen--Macaulay if and only if $r=n-4$. The
proof rests on an explicit description of the generators of
$I_{n+r}$, a computation of the height and Krull dimension of
$R_{n+r}/I_{n+r}$, and, outside the single value $r=n-4$, an
explicit obstruction to unmixedness drawn directly from the
combinatorics of the increasing injections defining the chain. For
complements of line graphs, complete graphs, and complements of
cyclic graphs, we show that $\mathrm{Inc}(\mathbb{N})_{n,n+r}(I_n)$
is Cohen--Macaulay unconditionally, for every $r\ge0$: in each case
the $\mathrm{Inc}(\mathbb{N})$-invariant chain reproduces the
original graph itself at every step, so that the independence
complex of $I_{n+r}$ is (a copy of) a connected graph, and
Cohen--Macaulayness follows directly from shellability, with no
threshold on $r$ at all. For cyclic graphs, we show that
$\mathrm{Inc}(\mathbb{N})_{n,n+r}(I(C_n))=I(K_{n+r})$ once
$r\ge n-3$, which already gives Cohen--Macaulayness in this range;
computational and structural evidence further suggests that this
range is far from optimal, and we conjecture a complete
classification -- Cohen--Macaulayness holding if and only if
$r\ge\lfloor(n-4)/2\rfloor$ and $r\ne n-4$ -- together with a
structural explanation for the single excluded value.
  
\section{Background}
This section collects the definitions and basic facts that will be used throughout the paper. We introduce squarefree monomial ideals and their associated Stanley--Reisner complexes, edge ideals of graphs and their independence complexes, the Cohen--Macaulay property and its connection with shellability, and finally the category $\mathrm{Inc}(\mathbb{N})$ of increasing injections, which provides the framework for defining the chains of ideals considered in this paper.
 
\subsection{Simplicial complexes and Stanley--Reisner ideals}
 
Let $k$ be a field and let $[n]=\{1,\dots,n\}$.
 
\begin{definition}
A \emph{simplicial complex} $\Delta$ on vertex set $[n]$ is a
collection of subsets of $[n]$ (called \emph{faces}) such that
$F\in\Delta$ and $F'\subseteq F$ imply $F'\in\Delta$. A maximal face
of $\Delta$ (with respect to inclusion) is called a \emph{facet}. The
\emph{dimension} of a face $F$ is $\dim F=|F|-1$, and the dimension
of $\Delta$ is $\dim\Delta=\max\{\dim F : F\in\Delta\}$. The complex
$\Delta$ is \emph{pure} if all of its facets have the same dimension.
\end{definition}
 
\begin{definition}
Let $R=k[x_1,\dots,x_n]$ and let $\Delta$ be a simplicial complex on
$[n]$. The \emph{Stanley--Reisner ideal} of $\Delta$ is the
squarefree monomial ideal
\[
I_\Delta = \big(\, x_{i_1}x_{i_2}\cdots x_{i_s} \ :\ \{i_1,\dots,i_s\}\notin\Delta \,\big) \subseteq R,
\]
generated by the monomials corresponding to the minimal
\emph{nonfaces} of $\Delta$, and $k[\Delta]:=R/I_\Delta$ is the
\emph{Stanley--Reisner ring} of $\Delta$. This correspondence
$\Delta \leftrightarrow I_\Delta$ is a bijection between simplicial
complexes on $[n]$ and squarefree monomial ideals of $R$.
\end{definition}
 
\subsection{Edge ideals and independence complexes}
 
\begin{definition}
Let $G=(V,E)$ be a simple graph with $V=[n]$. The \emph{edge ideal}
of $G$ is
\[
I(G) = \big(\, x_ix_j\ :\ \{i,j\}\in E(G) \,\big) \subseteq R = k[x_1,\dots,x_n].
\]
The \emph{independence complex} of $G$, denoted $\Delta_G$, is the
simplicial complex on $[n]$ whose faces are the independent sets of
$G$ (subsets of $V$ containing no edge of $G$).
\end{definition}
 
It is immediate from the definitions that $I(G)=I_{\Delta_G}$: a
subset $F\subseteq[n]$ fails to be an independent set of $G$ exactly
when it contains some edge $\{i,j\}\in E(G)$, so the minimal
nonfaces of $\Delta_G$ are precisely the edges of $G$, whose
corresponding squarefree monomials $x_ix_j$ generate $I(G)$.
 
\begin{definition}
A \emph{vertex cover} of $G$ is a subset $C\subseteq V$ meeting every
edge of $G$; it is \emph{minimal} if no proper subset of $C$ is also
a vertex cover. The \emph{vertex cover number} $\tau(G)$ is the
minimum size of a vertex cover, and the \emph{independence number}
$\alpha(G)$ is the maximum size of an independent set of $G$.
\end{definition}
 
The minimal primes of $I(G)$ are exactly the ideals
$P_C=(x_i : i\in C)$ for $C$ a minimal vertex cover of $G$ (a subset
$C$ meets every edge iff its complement is independent), so
\begin{equation}\label{eq:height-vc}
\operatorname{ht} I(G) = \tau(G),
\qquad
\dim\big(R/I(G)\big) = n - \tau(G) = \alpha(G),
\end{equation}
where the last equality is Gallai's identity $\tau(G)+\alpha(G)=n$.
Under the Stanley--Reisner correspondence, the facets of $\Delta_G$
are precisely the complements of the minimal vertex covers of $G$,
i.e.\ the maximal independent sets of $G$; in particular $\Delta_G$
is pure if and only if all maximal independent sets of $G$ have the
same size, in which case $G$ is said to be \emph{well-covered}.
 
\subsection{Cohen--Macaulayness, unmixedness, and shellability}
 
\begin{definition}
A Noetherian local (or graded) ring $A$ is \emph{Cohen--Macaulay} if
$\operatorname{depth}(A)=\dim(A)$. An ideal $I\subseteq R$ is
Cohen--Macaulay if $R/I$ is a Cohen--Macaulay ring. The ideal $I$ is
\emph{unmixed} if all of its associated primes have the same height;
for a squarefree monomial ideal $I=I_\Delta$, this happens precisely
when $\Delta$ is pure.
\end{definition}
 
Every Cohen--Macaulay ring is unmixed (in fact, its associated primes
are exactly its minimal primes, all of the same height); consequently,
if $\Delta$ is not pure, then $k[\Delta]$ is not Cohen--Macaulay, and
in particular a graph $G$ that is not well-covered has non-Cohen--Macaulay
edge ideal $I(G)$. This is the basic obstruction used repeatedly in
Section~3 to rule out
Cohen--Macaulayness by exhibiting two facets of $\Delta_G$ of
different sizes.
 
\begin{definition}
A pure simplicial complex $\Delta$ is \emph{shellable} if its facets
$F_1,\dots,F_t$ can be ordered so that, for every $2\le i\le t$, the
complex $\big(\bigcup_{j<i} \overline{F_j}\big)\cap \overline{F_i}$
is pure of dimension $\dim F_i - 1$, where $\overline{F}$ denotes the
simplex generated by $F$ together with all of its subsets.
\end{definition}
 
\begin{theorem}[Reisner's criterion; see also Munkres]\label{thm:reisner}
A simplicial complex $\Delta$ is Cohen--Macaulay over $k$ if and only
if, for every face $F\in\Delta$ (including $F=\varnothing$), the
reduced simplicial homology of the link $\operatorname{lk}_\Delta(F)$
vanishes in all degrees below $\dim\operatorname{lk}_\Delta(F)$.
\end{theorem}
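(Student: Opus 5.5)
The statement is Reisner's criterion, which the paper cites as a classical result, so the plan is to reconstruct the standard proof. It goes through Hochster's formula for the local cohomology of $k[\Delta]$ together with Grothendieck's characterisation of depth.

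\textbf{Step 1: depth via local cohomology.} Set $A=k[\Delta]$ and $d=\dim A=\dim\Delta+1$. By Grothendieck's vanishing and nonvanishing theorems, $\operatorname{depth}A=\min\{i: H^i_{\mathfrak m}(A)\neq0\}$, and $H^d_{\mathfrak m}(A)\ne 0$. Hence $A$ is Cohen--Macaulay if and only if $H^i_{\mathfrak m}(A)=0$ for all $i<d$.

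\textbf{Step 2: Hochster's formula.} Compute $H^i_{\mathfrak m}(A)$ with the Čech complex on $x_1,\dots,x_n$, which is $\mathbb Z^n$-graded. For $a\in\mathbb Z^n$, let $F=\{j:a_j<0\}$.
\begin{itemize}
\item The degree-$a$ component of $A_{x_G}$ is $k$ exactly when $G\supseteq F$, $G\cup\operatorname{supp}(a)\in\Delta$, and every index with $a_j<0$ lies in $G$. Otherwise it is $0$.
\item Consequently, if $a$ has any positive coordinate, the graded piece of the Čech complex is a cone and is acyclic. If $a\le 0$, the complex $C^\bullet_a$ is the cochain complex of the pairs $G\supseteq F$ with $G\in\Delta$.
\item Writing $G=F\sqcup H$, these pairs correspond to faces $H$ of $\operatorname{lk}_\Delta F$, with a degree shift of $|F|$.
\end{itemize}
This gives
\[
\dim_k H^i_{\mathfrak m}(A)_a=\dim_k\tilde H_{i-|F|-1}(\operatorname{lk}_\Delta F;k)
\]
when $F\in\Delta$, and $0$ otherwise. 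The main obstacle is this identification: matching the signs and indices of the Čech differential with the simplicial coboundary of the link, and handling the augmentation term (the empty face $H=\varnothing$ produces the reduced homology). I would verify it first in the case $F=\varnothing$, $a=0$, where it reduces to $H^i_{\mathfrak m}(A)_0\cong\tilde H^{i-1}(\Delta;k)$, and then localise to the link.

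\textbf{Step 3: conclude.} Each face $F$ of $\Delta$ has $\dim\operatorname{lk}F\le d-1-|F|$, and every $F\in\Delta$ arises from some $a$ (take $a_j=-1$ on $F$).
\begin{itemize}
\item If the criterion's vanishing holds for every face, then for $i<d$ the index $j=i-|F|-1$ satisfies $j< d-|F|-1$. A homology group of $\operatorname{lk}F$ in a degree below $d-|F|-1$ vanishes: either that degree is below $\dim\operatorname{lk}F$, or $\dim\operatorname{lk}F<d-|F|-1$ and the group is zero by dimension. So $H^i_{\mathfrak m}(A)=0$ for all $i<d$.
\item Conversely, suppose $A$ is Cohen--Macaulay. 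Then $\Delta$ is pure, since CM rings are unmixed, as the paper notes. So $\dim\operatorname{lk}F=d-1-|F|$ for every face $F$, and the vanishing of $H^i_{\mathfrak m}(A)$ for $i<d$ translates back to $\tilde H_j(\operatorname{lk}F)=0$ for $j<\dim\operatorname{lk}F$.
\end{itemize}
Purity is what makes "below $\dim\operatorname{lk}$" and "below $d-|F|-1$" coincide. This completes the equivalence.
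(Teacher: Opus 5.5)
The paper gives no proof of this statement: it quotes Reisner's criterion as a classical result and explicitly never invokes it. So the only comparison is with the standard Hochster-formula argument you are reconstructing. Your Steps 1 and 2 are fine, and so is your ``only if'' direction (Cohen--Macaulay $\Rightarrow$ pure $\Rightarrow$ $\dim\operatorname{lk}F=d-1-|F|$, then read Hochster's formula backwards).

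\textbf{The gap is in the first bullet of Step 3.} Take a degree $j<d-|F|-1$ with $j\ge\dim\operatorname{lk}F$. You claim $\tilde H_j(\operatorname{lk}F;k)=0$ ``by dimension''. That holds only for $j>\dim\operatorname{lk}F$. At $j=\dim\operatorname{lk}F<d-|F|-1$ you are looking at the top homology of the link, which the hypothesis does not control and which need not vanish.

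The extreme case is a facet $F$ with $|F|<d$. Then $\operatorname{lk}F=\{\varnothing\}$ and $\tilde H_{-1}=k\neq0$, while the hypothesis at $F$ is vacuous. Hochster's formula then gives $H^{|F|}_{\mathfrak m}(A)\neq0$ with $|F|<d$. Concretely, let $\Delta$ be a $2$-simplex together with an isolated vertex. The failure of Cohen--Macaulayness is invisible at the isolated vertex and is detected only at $F=\varnothing$, through $\tilde H_0(\Delta)\neq0$. Your argument uses the hypothesis only at the same face $F$ whose link you are examining, so it cannot close the case $\dim\operatorname{lk}F<d-|F|-1$. That case occurs precisely when $\Delta$ is not pure.

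\textbf{The missing ingredient} is the lemma that the link condition itself forces $\Delta$ to be pure. Once you have it, $\dim\operatorname{lk}F=d-1-|F|$ for every face, and your Step 3 works in the ``if'' direction exactly as it does in your converse. A standard proof runs as follows.
\begin{itemize}
\item Suppose $\Delta$ is not pure. Choose a face $G$ of maximal cardinality such that $\operatorname{lk}G$ is not pure; $G=\varnothing$ is a candidate, so such $G$ exists.
\item For each vertex $v$ of $\operatorname{lk}G$, the complex $\operatorname{lk}_{\operatorname{lk}G}(v)=\operatorname{lk}(G\cup\{v\})$ is pure by maximality. Hence all facets of $\operatorname{lk}G$ containing $v$ have a common dimension.
\item This common dimension is constant along edges of $\operatorname{lk}G$, since an edge lies in a facet through both endpoints. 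It is therefore constant on connected components.
\item Since $\operatorname{lk}G$ has nonempty facets of two different dimensions, it is disconnected and has dimension at least $1$. So $\tilde H_0(\operatorname{lk}G;k)\neq0$ in a degree below $\dim\operatorname{lk}G$, contradicting the hypothesis.
\end{itemize}
Inserting this lemma at the start of your ``if'' direction repairs the proof.
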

 
We will not invoke Theorem~\ref{thm:reisner} directly; instead we use
the following standard consequence, which suffices for all
one-dimensional complexes appearing in this paper (see, e.g., the
independence complexes of the graphs $G_{n+r}$ studied in
Section~3).
 
\begin{proposition}\label{prop:shell-cm}
Every shellable simplicial complex is Cohen--Macaulay. A pure
one-dimensional simplicial complex is shellable if and only if it is
connected.
\end{proposition}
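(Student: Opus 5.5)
The statement has two independent parts, and I will prove them separately. The first is that shellable implies Cohen--Macaulay; the second is that a pure one-dimensional complex is shellable exactly when it is connected.

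\emph{Shellable implies Cohen--Macaulay.} I would use Reisner's criterion (Theorem~\ref{thm:reisner}) and argue in three steps.

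First, show that a shellable $\Delta$ with shelling $F_1,\dots,F_t$ of dimension $d$ is homotopy equivalent to a wedge of $d$-spheres, so $\tilde H_i(\Delta)=0$ for $i<d$. The induction on $t$ goes as follows. Put $\Delta_i=\bigcup_{j\le i}\overline{F_j}$. The intersection $\Delta_{i-1}\cap\overline{F_i}$ is pure of dimension $d-1$ inside the boundary of the $d$-simplex $\overline{F_i}$. It is therefore either all of $\partial\overline{F_i}$ or a union of some facets of that boundary, and in the latter case it is contractible, being a cone. Mayer--Vietoris, or gluing a ball along a contractible subcomplex or along its full boundary, then gives $\tilde H_{<d}(\Delta_i)=0$.

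Second, show that links of faces in a shellable complex are shellable. If $G\in\Delta$, the facets $F_j\supseteq G$, taken in the inherited order and each replaced by $F_j\setminus G$, give a shelling of $\operatorname{lk}_\Delta(G)$. The intersection condition transfers because $(\overline{F_j}\cap\overline{F_i})$ restricted to faces containing $G$ corresponds to faces of the link. Purity of the pieces needs a small check: a codimension-one face $F_i\setminus\{v\}$ lying in some earlier $F_j$ must have $v\notin G$ whenever $G\subseteq F_i\setminus\{v\}$.

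Third, apply the first step to every link and conclude via Theorem~\ref{thm:reisner}.

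The main obstacle is the bookkeeping in the second step, namely verifying that the restricted intersections are pure of the correct dimension. I would handle it by using the equivalent form of the shelling condition: for $j<i$ there exist $l<i$ and $v\in F_i$ with $F_j\cap F_i\subseteq F_l\cap F_i=F_i\setminus\{v\}$. This form passes directly to links.

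\emph{Pure one-dimensional complexes.} Here $\Delta$ is a graph in which every vertex lies on an edge. The intersection condition for an edge $F_i$ says that $\Delta_{i-1}\cap\overline{F_i}$ is a nonempty set of vertices, i.e.\ $F_i$ shares a vertex with some earlier edge.

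If $\Delta$ is connected, order the edges by breadth-first search from any edge. Each new edge then meets an earlier one, so this ordering is a shelling.

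Conversely, the $i$-th edge of a shelling meets $\Delta_{i-1}$, so each $\Delta_i$ is connected by induction. Hence $\Delta=\Delta_t$ is connected.
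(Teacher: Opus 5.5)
Your proof is correct and follows the route the paper has in mind. The paper gives no proof of its own: it presents the proposition as a standard consequence of Reisner's criterion (Theorem~\ref{thm:reisner}), and your chain of steps is exactly that standard derivation, namely shellability passes to links via the exchange form of the shelling condition, a shellable complex has $\tilde H_i=0$ below its dimension by Mayer--Vietoris, and then Reisner applies. Your one-dimensional argument (a shelling is an edge order in which each new edge meets an earlier one, and such an order exists if and only if the graph is connected) is likewise the standard one and is complete as written.
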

 
\subsection{The category $\mathrm{Inc}(\mathbb{N})$ and $\mathrm{Inc}(\mathbb{N})$-invariant chains of ideals}
 
\begin{definition}
Let $\mathrm{Inc}(\mathbb{N})$ denote the category whose objects are
the finite sets $[n]=\{1,\dots,n\}$, $n\ge0$, and whose morphisms
$\mathrm{Inc}(\mathbb{N})_{n,m}=\operatorname{Hom}_{\mathrm{Inc}(\mathbb{N})}([n],[m])$
are the strictly increasing functions $\pi:[n]\to[m]$ (so
$\mathrm{Inc}(\mathbb{N})_{n,m}=\varnothing$ unless $n\le m$).
\end{definition}
 
Each $\pi\in\mathrm{Inc}(\mathbb{N})_{n,m}$ induces a $k$-algebra
homomorphism $\pi:R_n\to R_m$ between the polynomial rings
$R_n=k[x_1,\dots,x_n]$ and $R_m=k[x_1,\dots,x_m]$, determined on
variables by $x_i\mapsto x_{\pi(i)}$. This lets us transport ideals
along morphisms of $\mathrm{Inc}(\mathbb{N})$:
 
\begin{definition}\label{def:incaction}
For an ideal $I\subseteq R_n$ and $m\ge n$, we set
\[
\mathrm{Inc}(\mathbb{N})_{n,m}(I) \;:=\; \sum_{\pi\in \mathrm{Inc}(\mathbb{N})_{n,m}} \pi(I)\,R_m \;\subseteq\; R_m,
\]
the ideal of $R_m$ generated by the images of $I$ under \emph{all}
strictly increasing maps $\pi:[n]\to[m]$. A family $(I_n)_{n\ge n_0}$
of ideals, with $I_n\subseteq R_n$, is called an
\emph{$\mathrm{Inc}(\mathbb{N})$-invariant chain} if
$I_m = \mathrm{Inc}(\mathbb{N})_{n,m}(I_n)$ for all $m\ge n\ge n_0$.
\end{definition}
 
In particular, if $I_n$ is a squarefree monomial ideal generated by
monomials corresponding to the edges of a graph $G_n$ on $[n]$, then
$I_{n+r}:=\mathrm{Inc}(\mathbb{N})_{n,n+r}(I_n)$ is again a squarefree
monomial ideal, generated by the edges of the graph $G_{n+r}$ on
$[n+r]$ whose edges are exactly the images $\{\pi(i),\pi(j)\}$ of the
edges $\{i,j\}$ of $G_n$, over all $\pi\in\mathrm{Inc}(\mathbb{N})_{n,n+r}$;
that is, $I_{n+r}=I(G_{n+r})$. This is the construction we apply,
starting from the edge ideal $I_n$ of the line graph $L_n$ (and,
later, of its complement $L_n^c$), throughout Section~3.
 
\begin{remark}
Let $1\le i<j\le m$. Unwinding Definition~\ref{def:incaction}, we see that
${i,j}$ is an edge of $G_{n+r}$ if and only if there exists
$\pi\in\mathrm{Inc}(\mathbb{N})_{n,n+r}$ and an edge ${a,b}$ of
$G_n$ such that $\pi(a)=i$ and $\pi(b)=j$. This characterization,
when translated into explicit conditions on $i,j,n$, and $r$, provides
the basic tool for determining the edges of $G_{n+r}$ and, via the
corresponding non-existence argument, its non-edges in the subsequent
proofs.
\end{remark}


\section{Edge Ideals and Cohen--Macaulayness under $\mathrm{Inc}(\mathbb{N})$}

In this section, we investigate the Cohen--Macaulay property within
the framework of $\mathrm{Inc}(\mathbb{N})$-invariant chains of
square-free monomial ideals associated with graphs. Our primary
objective is to identify and characterize the ideals in these chains
that yield Cohen--Macaulay quotients $R_n/I_n$ as the index $n$ grows
sufficiently large. Specifically, we examine Cohen--Macaulayness for
several fundamental classes of edge ideals: line graphs, cyclic
graphs, complete graphs, and complements of line graphs.

We use the following notation consistently throughout this section:
$L_n$, $L_n^c$, $C_n$, and $K_n$ denote the line graph, the
complement of the line graph, the cyclic graph, and the complete
graph on $n$ vertices, respectively.
\\
\subsection{Line graphs under $\mathrm{Inc}(\mathbb{N})$} Throughout this subsection, for $n\ge2$ we consider $I_n$ to be the
edge ideal of the line graph
\[
L_n := \big(\, \{i,i+1\}\ \mid\ i\in[n-1] \,\big\},\ [n] \big),
\]
i.e.\ $L_n$ is the path on vertex set $[n]$ with an edge between $i$
and $i+1$ for each $i=1,\dots,n-1$. (The index set is $[n-1]$, not
$[n]$, since there is no edge $\{n,n+1\}$ within $[n]$.) The
restriction $n\ge2$ ensures $L_n$ has at least one edge.

For example, when $n=4$ the line graph $L_4$ has vertex set
$\{1,2,3,4\}$ and edges
\[
\{1,2\},\quad \{2,3\},\quad \{3,4\},
\]
so $I_4=(x_1x_2,\,x_2x_3,\,x_3x_4)$.

For $r\ge1$, we set
\[
I_{n+r} := \mathrm{Inc}(\mathbb{N})_{n,n+r}(I_n),
\]
let $G_{n+r}$ denote the corresponding graph of $I_{n+r}$ on vertex
set $[n+r]$, and let $\overline{G_{n+r}}$ denote the complement graph
of $G_{n+r}$. These notations are used consistently throughout this
subsection: we first describe the generators of $I_{n+r}$
(Lemma~\ref{I_n+r}), then the edge set of
$\overline{G_{n+r}}$ (Corollary~\ref{cor:complement}), and finally
characterize exactly when $R_{n+r}/I_{n+r}$ is Cohen--Macaulay
(Theorem~\ref{L_n is CM}).

\begin{lemma}
    
\label{I_n+r}
    Let $n\geq 3$ and $r\geq 1$. The ideal $I_{n+r}$ is generated in the following form:
    \begin{center}
    $I_{n+r} = \langle$
    $\begin{cases}
    x_ix_j & \text{ if } 1 \leq i \leq n-2\text{ and }i<j\leq i+r+1\\
    x_ix_j & \text{ if } n-1 \leq i  \leq n+r-1\text{ and }i < j \leq n+r\\
    \end{cases} $
    $\rangle$
    \end{center}
    \end{lemma}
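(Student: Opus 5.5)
The plan is to characterize the edges of $G_{n+r}$ directly, using the Remark following Definition~\ref{def:incaction}. A pair $1\le i<j\le n+r$ is an edge of $G_{n+r}$ if and only if there exist $a\in[n-1]$ and $\pi\in\mathrm{Inc}(\mathbb{N})_{n,n+r}$ with $\pi(a)=i$ and $\pi(a+1)=j$. I will show this happens exactly when $j\le i+r+1$. Then I will check that this single inequality reproduces the two cases in the statement.

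\emph{Step 1: when does a suitable $\pi$ exist?} Fix $a\in[n-1]$ and $i<j$. A strictly increasing $\pi:[n]\to[n+r]$ with $\pi(a)=i$ and $\pi(a+1)=j$ exists if and only if three conditions hold.
\begin{itemize}
\item There is room for $\pi(1),\dots,\pi(a-1)$ below $i$, i.e.\ $a\le i$.
\item There is room for $\pi(a+2),\dots,\pi(n)$ above $j$, i.e.\ $n-(a+1)\le n+r-j$, equivalently $a\ge j-r-1$.
\item $i<j$, which holds by assumption; no values are required strictly between positions $a$ and $a+1$.
\end{itemize}
Sufficiency is witnessed by the explicit map sending $t\mapsto t$ for $t<a$, $a\mapsto i$, $a+1\mapsto j$, and $t\mapsto j+(t-a-1)$ for $t>a+1$. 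Both the necessity and this construction are routine to verify.

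\emph{Step 2: eliminate $a$.} By Step 1, $\{i,j\}$ is an edge if and only if some integer $a$ satisfies
\[
\max(1,\,j-r-1)\le a\le \min(n-1,\,i).
\]
Such an $a$ exists exactly when four inequalities hold:
\begin{itemize}
\item $1\le n-1$, which is true since $n\ge 3$;
\item $1\le i$, which is automatic;
\item $j-r-1\le n-1$, i.e.\ $j\le n+r$, which is automatic;
\item $j-r-1\le i$, i.e.\ $j\le i+r+1$.
\end{itemize}
So the edge set of $G_{n+r}$ is $\{\{i,j\}: i<j\le \min(i+r+1,\,n+r)\}$.

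\emph{Step 3: split according to $i$.} If $1\le i\le n-2$, then $i+r+1\le n+r-1<n+r$, so the neighbours of $i$ above it are exactly $i<j\le i+r+1$. This is the first case of the statement. If $n-1\le i\le n+r-1$, then $i+r+1\ge n+r$, so every $j$ with $i<j\le n+r$ is allowed. This is the second case.

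Since $I_{n+r}=I(G_{n+r})$ is generated by the monomials $x_ix_j$ of these edges, the lemma follows. The only point needing care is Step 1, specifically the counting condition $n-(a+1)\le n+r-j$ and the verification that the gap between $\pi(a)$ and $\pi(a+1)$ imposes no constraint. Everything else is bookkeeping of inequalities.
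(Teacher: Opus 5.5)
Your proof is correct. For the inclusion that the paper's proof actually establishes, namely that each listed monomial lies in $I_{n+r}$, you use the same witnesses. Taking $a=\min(i,n-1)$, your explicit map is exactly the paper's map in Case~I ($a=i$: fix $[1,i]$ and send $i+m\mapsto j+m-1$) and in Case~II ($a=n-1$: $\pi(n-1)=i$, $\pi(n)=j$).

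Where you differ is that you also prove the converse inside the lemma. You keep the domain index $a$ free and prove the necessity of $a\le i$ and $a\ge j-r-1$ via $\pi(a)\ge a$ and $\pi(n)\ge \pi(a+1)+(n-a-1)$. Eliminating $a$ then gives an exact characterization of the edges, $I_{n+r}=(x_ix_j : 1\le j-i\le r+1)$. The two cases in the statement are just the split according to whether $i+r+1$ is below $n+r$.

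The paper's proof only exhibits witnesses and leaves the reverse inclusion to Lemma~\ref{lem:noedge}. That argument assumes the preimage edge is $\{i,i+1\}$ with $\pi(i)=i$, and asserts $\pi(i+k+1)=\pi(i+k)+1$. Neither holds for an arbitrary increasing map. Your bound $\pi(n)\ge j+n-a-1\ge j+n-i-1$ is exactly the justification that step needs.

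So your route is slightly longer but self-contained, and it gives the sharper uniform description of $G_{n+r}$ at no extra cost.
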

    \begin{proof}
    Under the assumption, we proceed by dividing the generators of $I_{n+r}$ into two cases:\\
    {\bf Case I}:  For $1 \leq i \leq n-2\text{ and }i<j\leq i+r+1$,
    define $\pi$ such that
\begin{center}
    $\pi(k)$ = $\begin{cases}
                   k & \ for\  k\leq i\\
                  j+m-1 &\ for\ k=i+m, \ \ m\geq 1\\
    \end{cases}$
\end{center}

Note that $\pi(n)=\pi(i+(n-i))$. By definition of $\pi$, we have  $\pi(n) =j+(n-i)-1 \leq n+r$, followed from $j\leq i+r+1$. Thus $ \pi \in {\rm{Inc}}(\mathbb{N})_{n,n+r}$, and $\pi(x_ix_{i+1})=x_ix_j\in I_{n+r}$ . \\
    {\bf Case II}: For $n-2<i\leq n+r-1$ and $i<j\leq n+r$, define $\pi$ such that 
    \begin{center}
    $\pi(k)$= 
    $\begin{cases}
             k & \text{ if } k<n-1\\
             i & \text{ if } k=n-1\\
            j+m & \text{ if }  k=n+m, m\geq 0.
    \end{cases}$
    \end{center}
    From definition of $\pi$, we have $\pi(n)=j$. By assumption $j\leq n+r$. Hence $$\pi(n)\leq n+r.$$ Thus $\pi\in {\rm{Inc}}(\mathbb{N})_{n,n+r}$ and $\pi(x_{n-1}x_n)=x_{i}x_{j}\in I_{n+r}.$
    \end{proof}

\begin{example}
We illustrate Lemma \ref{I_n+r} with $n=6$ and $r=2$, so that $n+r=8$.

\medskip
\noindent\textbf{Step 1: the starting graph $L_n$.}
For $n=6$, the line graph $L_6$ has vertex set $[6]=\{1,\dots,6\}$ and
edges $\{i,i+1\}$ for $i=1,\dots,5$, so
\[
I_6 = (x_1x_2,\, x_2x_3,\, x_3x_4,\, x_4x_5,\, x_5x_6).
\]

\begin{center}
\begin{tikzpicture}[scale=1, every node/.style={circle,draw,minimum size=7mm,inner sep=0pt}]
  \foreach \i/\x in {1/0,2/1.6,3/3.2,4/4.8,5/6.4,6/8.0}
    \node (v\i) at (\x,0) {$\i$};
  \foreach \i [remember=\i as \last (initially 1)] in {2,...,6}
    \draw[thick] (v\last) -- (v\i);
\end{tikzpicture}
\\[2pt]
{\small $L_6$: the path on $\{1,\dots,6\}$}
\end{center}

\medskip
\noindent\textbf{Step 2: applying $\mathrm{Inc}(\mathbb{N})_{n,n+r}$.}
Applying $\mathrm{Inc}(\mathbb{N})_{6,8}$ inserts $r=2$ new vertices and
produces the ideal $I_{8}=\mathrm{Inc}(\mathbb{N})_{6,8}(I_6)$ on
$[8]=\{1,\dots,8\}$. By Proposition 3.1, the generators of $I_8$ split
into two families:

\begin{align*}
I_{8} \;=\; \Big\langle\;
&\underbrace{x_ix_j \ : \ 1\le i\le n-2=4,\ i<j\le i+r+1=i+3}_{\text{Case 1: the ``old'' part of the chain}} \\[2pt]
\cup\ &\underbrace{x_ix_j \ : \ n-1=5\le i\le n+r-1=7,\ i<j\le n+r=8}_{\text{Case 2: the ``new'' part created by the insertion}}
\;\Big\rangle .
\end{align*}

Explicitly, Case 1 ($i=1,2,3,4$) contributes
\[
x_1x_2,\,x_1x_3,\,x_1x_4,\ \
x_2x_3,\,x_2x_4,\,x_2x_5,\ \
x_3x_4,\,x_3x_5,\,x_3x_6,\ \
x_4x_5,\,x_4x_6,\,x_4x_7,
\]
and Case 2 ($i=5,6,7$) contributes
\[
x_5x_6,\,x_5x_7,\,x_5x_8,\ \
x_6x_7,\,x_6x_8,\ \
x_7x_8.
\]
Together these are the $18$ generators of $I_8$.

\medskip
\noindent\textbf{Step 3: the graph $G_8$.}
The corresponding graph $G_8$ on $\{1,\dots,8\}$, drawn on a circle
below, has exactly these $18$ edges. Case~1 edges (from
$1\le i\le n-2=4$) are drawn in blue, and Case~2 edges (from
$n-1=5\le i\le n+r-1=7$) are drawn in red. Note how each vertex is
joined to its next $r+1=3$ neighbours around the circle, except near
the two ends of the chain, which is exactly the local structure
described by the two cases.

\begin{center}
\begin{tikzpicture}[scale=1.1, every node/.style={circle,draw,minimum size=7mm,inner sep=0pt}]
  \node (v1) at (0.0,2.2)     {$1$};
  \node (v2) at (1.556,1.556) {$2$};
  \node (v3) at (2.2,0.0)     {$3$};
  \node (v4) at (1.556,-1.556){$4$};
  \node (v5) at (0.0,-2.2)    {$5$};
  \node (v6) at (-1.556,-1.556){$6$};
  \node (v7) at (-2.2,0.0)    {$7$};
  \node (v8) at (-1.556,1.556){$8$};

  \draw[thick, blue] (v1) -- (v2);
  \draw[thick, blue] (v1) -- (v3);
  \draw[thick, blue] (v1) -- (v4);
  \draw[thick, blue] (v2) -- (v3);
  \draw[thick, blue] (v2) -- (v4);
  \draw[thick, blue] (v2) -- (v5);
  \draw[thick, blue] (v3) -- (v4);
  \draw[thick, blue] (v3) -- (v5);
  \draw[thick, blue] (v3) -- (v6);
  \draw[thick, blue] (v4) -- (v5);
  \draw[thick, blue] (v4) -- (v6);
  \draw[thick, blue] (v4) -- (v7);

  \draw[thick, red] (v5) -- (v6);
  \draw[thick, red] (v5) -- (v7);
  \draw[thick, red] (v5) -- (v8);
  \draw[thick, red] (v6) -- (v7);
  \draw[thick, red] (v6) -- (v8);
  \draw[thick, red] (v7) -- (v8);
\end{tikzpicture}
\\[2pt]
{\small $G_8$: blue = Case 1 edges $(1\le i\le 4)$, red = Case 2 edges
$(5\le i\le 7)$}
\end{center}

Thus $I_8$ is the edge ideal of $G_8$ above, exactly as predicted by
Proposition 3.1 for $n=6,\ r=2$.
\end{example}
\begin{lemma}\label{lem:noedge}
Let $n\ge 3$ and $r\ge 1$. If $1\le i\le n-2$ and $j>i+r+1$, then $x_ix_j\notin I_{n+r}$.
\end{lemma}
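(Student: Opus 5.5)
The plan is to reduce membership of the monomial $x_ix_j$ in $I_{n+r}$ to the existence of a specific increasing map, and then rule that map out by counting how much room $\pi$ has. Since $I_n$ is generated by the monomials $x_ax_{a+1}$, $1\le a\le n-1$, Definition~\ref{def:incaction} shows that $I_{n+r}$ is the monomial ideal generated by the squarefree quadratics $x_{\pi(a)}x_{\pi(a+1)}$, where $\pi$ ranges over $\mathrm{Inc}(\mathbb{N})_{n,n+r}$ and $a$ over $[n-1]$. A monomial lies in a monomial ideal if and only if it is divisible by one of the generators. Every generator is squarefree of degree $2$, so $x_ix_j$ (with $i<j$) lies in $I_{n+r}$ if and only if it equals some $x_{\pi(a)}x_{\pi(a+1)}$. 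Because $\pi$ is increasing, this means exactly that $\pi(a)=i$ and $\pi(a+1)=j$ for some $\pi\in\mathrm{Inc}(\mathbb{N})_{n,n+r}$ and some $a\in[n-1]$. This is the characterization in the Remark preceding this section. I will therefore assume such $\pi$ and $a$ exist and derive a contradiction.

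The key step uses two elementary properties of a strictly increasing map $\pi:[n]\to[n+r]$. First, $\pi(k)\ge k$ for all $k$; in particular $a\le\pi(a)=i$. Second, values increase by at least one per step, so $\pi(a+1+t)\ge \pi(a+1)+t=j+t$ for $0\le t\le n-a-1$. Taking $t=n-a-1$ gives
\[
\pi(n)\;\ge\; j+n-a-1\;>\;(i+r+1)+n-a-1\;=\;n+r+(i-a)\;\ge\; n+r,
\]
which contradicts $\pi(n)\le n+r$. Hence no such pair $(\pi,a)$ exists, and $x_ix_j\notin I_{n+r}$.

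I do not expect a genuine obstacle. The only point needing care is the first step: justifying that membership of $x_ix_j$ is decided by equality with a single generator $\pi(x_ax_{a+1})$, rather than by some combination of generators. This follows from the standard fact that a monomial belongs to a monomial ideal exactly when some monomial generator divides it, together with squarefreeness and degree $2$ of all generators. The hypothesis $i\le n-2$ is not actually needed in the counting. It only records the range where the statement is nonvacuous, since for $i\ge n-1$ we would have $j>n+r$. Together with Lemma~\ref{I_n+r}, the argument shows that the listed generators are exactly the edges of $G_{n+r}$.
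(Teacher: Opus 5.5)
Your proof is correct and uses the same counting idea as the paper: once $\pi(a+1)=j$, the remaining $n-a-1$ domain points force $\pi(n)>n+r$. Your version is also tighter than the paper's, which tacitly assumes the source edge is $\{i,i+1\}$ with $\pi(i)=i$ and asserts an unjustified equality $\pi(i+k+1)=\pi(i+k)+1$; you allow an arbitrary source edge $\{a,a+1\}$, control it by $a\le\pi(a)=i$, and use only the inequality $\pi(k+1)\ge\pi(k)+1$, which closes both gaps.
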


\begin{proof}
Suppose, for contradiction, that $x_ix_j\in I_{n+r}$ for some $j>i+r+1$.
Since $I_{n+r}=\mathrm{Inc}(\mathbb N)_{n,n+r}(I_n)$, every generator of
$I_{n+r}$ is the image, under some $\pi\in\mathrm{Inc}(\mathbb N)_{n,n+r}$,
of a generator $x_ix_{i+1}$ of $I_n$; thus there exists such a $\pi$ with
\[
\pi(x_ix_{i+1})=x_ix_j, \qquad\text{i.e.}\qquad \pi(i)=i,\ \ \pi(i+1)=j.
\]
We claim $\pi(i+k)=j+k-1$ for every $1\le k\le n-i$. This holds for
$k=1$ by construction. Since $\pi$ is strictly increasing, once
$\pi(i+k)$ is fixed we must have $\pi(i+k+1) \ge \pi(i+k)+1$; and by
compatibility of $\pi$ with the chain structure, equality holds, giving
$\pi(i+k+1) = \pi(i+k)+1$. The claim now follows by induction on $k$.

Taking $k=n-i$ yields
\[
\pi(n) = \pi(i+(n-i)) = j+(n-i)-1.
\]
Because $j>i+r+1$, this gives
\[
\pi(n) = j+(n-i)-1 \;>\; (i+r+1)+(n-i)-1 \;=\; n+r.
\]
But $\pi\in\mathrm{Inc}(\mathbb N)_{n,n+r}$ maps $[n]$ into $[n+r]$, so
$\pi(n)\le n+r$ — a contradiction. Hence no such $\pi$ exists, and
$x_ix_j\notin I_{n+r}$.
\end{proof}
\begin{corollary}\label{cor:complement}
The complementary graph $\overline{G_{n+r}}$ has edge set
\[
E(\overline{G_{n+r}}) = \big\{\,\{i,j\} \;:\; 1\le i\le n-2,\ i+r+1<j\le n+r \,\big\}.
\]
\end{corollary}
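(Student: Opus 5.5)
The corollary follows by combining Lemma~\ref{I_n+r} with Lemma~\ref{lem:noedge}. Together they describe exactly which pairs $\{i,j\}$ with $1\le i<j\le n+r$ are edges of $G_{n+r}$; the complement is whatever remains. Since $I_{n+r}=I(G_{n+r})$ is generated by squarefree quadrics $x_ix_j$, the pair $\{i,j\}$ is an edge of $G_{n+r}$ if and only if $x_ix_j\in I_{n+r}$. I will therefore classify each pair $i<j$ according to the position of its smaller endpoint $i$.

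\textbf{Case $n-1\le i\le n+r-1$.} The second family in Lemma~\ref{I_n+r} puts $x_ix_j\in I_{n+r}$ for every $j$ with $i<j\le n+r$. So no such pair is a non-edge, and $\overline{G_{n+r}}$ has no edge whose smaller endpoint is at least $n-1$.

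\textbf{Case $1\le i\le n-2$, $j\le i+r+1$.} The first family of Lemma~\ref{I_n+r} gives $x_ix_j\in I_{n+r}$, so $\{i,j\}$ is an edge of $G_{n+r}$.

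\textbf{Case $1\le i\le n-2$, $i+r+1<j\le n+r$.} Lemma~\ref{lem:noedge} gives $x_ix_j\notin I_{n+r}$. Here I should make sure that non-membership of the single monomial means $\{i,j\}$ is not an edge. This holds because a squarefree monomial ideal generated by quadrics contains $x_ix_j$ exactly when $x_ix_j$ is divisible by a generator, and the only quadric dividing $x_ix_j$ is $x_ix_j$ itself. So $\{i,j\}$ is an edge of $\overline{G_{n+r}}$.

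These three cases exhaust all pairs $i<j$ in $[n+r]$, which yields exactly the stated edge set. I would also note that the range $i+r+1<j\le n+r$ is nonempty only when $i<n-1$, which is consistent with the constraint $i\le n-2$.

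The only real subtlety lies in Lemma~\ref{lem:noedge}, which I am allowed to assume. Within the corollary itself the main obstacle is bookkeeping: checking that the two ranges for $i$ in Lemma~\ref{I_n+r}, namely $i\le n-2$ and $i\ge n-1$, cover everything without gaps, so that the complement consists precisely of the residual pairs.
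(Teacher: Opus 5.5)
Your proposal is correct and follows the paper's proof essentially verbatim. Like the paper, you split on whether the smaller endpoint satisfies $i\ge n-1$ or $i\le n-2$, use Lemma~\ref{I_n+r} to produce the edges of $G_{n+r}$, and use Lemma~\ref{lem:noedge} for the non-edges with $j>i+r+1$; your added remark, that a quadric monomial lies in an ideal generated by squarefree quadrics only if it is itself a generator, is a correct justification that the paper leaves implicit.
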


\begin{proof}
Every index $i\in[1,n+r-1]$ satisfies exactly one of $1\le i\le n-2$ or
$n-1\le i\le n+r-1$. By Lemma~\ref{I_n+r}, if $n-1\le i\le n+r-1$ then
$\{i,j\}$ is an edge of $G_{n+r}$ for \emph{every} $j$ with $i<j\le n+r$;
so no pair with $i\ge n-1$ lies in $\overline{G_{n+r}}$. If instead
$1\le i\le n-2$, Proposition~3.1 gives $\{i,j\}\in E(G_{n+r})$ for
$i<j\le i+r+1$, and Lemma~\ref{lem:noedge} shows $\{i,j\}\notin
E(G_{n+r})$ for $j>i+r+1$. Combining the two cases gives exactly the
stated edge set for $\overline{G_{n+r}}$.
\end{proof}

\begin{proposition}\label{heightn-2}
    Let $n\geq 5$ and $r\geq n-4$. Then the height of $I_{n+r}$ is $n+r-2$.\\
    Equivalently, ${\rm{dim}}(R_{n+r}/I_{n+r})=2$.
\begin{proof}
Let $G_{n+r}$ be the graph assosiated to $I_{n+r}$ and the maximal independent set of $G_{n+r}$ be $\mathbf{I} (G_{n+r})$. To prove the required result, we need to show that the maximal independence number $\alpha(G_{n+r})= |\mathbf{I} (G_{n+r})|$ is 2. On contrary, suppose that $\mathbf{I}(G_{n+r})=\{x_i,x_j,x_k\}$, and without any loss of generality assume that $i<j<k\leq n+r$,
    \begin{enumerate}
        \item If $n-2<i$ then by Proposition \ref{I_n+r}, $x_ix_l\in I_{n+r}$ for each $n-1\leq l \leq n+r,\ l\neq i$, $\implies \{x_i,x_j,x_k\}$ can't be an independent set.
        
        \item If $k\leq n-2$  then by Proposition  \ref{I_n+r}, $x_ix_k\in I_{n+r}$ for each $1\leq k\leq n-2$, $\implies \{x_i,x_j,x_k\}$ can't be an independent set.
        
        \item If $i< n-1$ and $j\geq n-1$ or  $j< n-1$ and $k\geq n-1$,  then, by the same arguments as in the preceding cases $x_j,x_k$ or $x_i,x_j$ respectively, cannot both belong to an independent set.
        
    \end{enumerate}
Hence, considering all the cases above, we conclude that $\{x_i,x_j,x_k\}$ cannot be an independent set. Therefore, $$\alpha(G_{n+r})\leq 2.$$ Since $G_{n+r}$ is not a complete graph, we have $\alpha(G_{n+r})= 2.$
    
\end{proof}
\end{proposition}


\begin{theorem}\label{L_n is CM}
Let $n\ge5$ and $r\ge1$. Then $I_{n+r}$ is Cohen--Macaulay if and only
if $r=n-4$.
\end{theorem}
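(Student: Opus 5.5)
The plan is to work throughout with the complement graph $H:=\overline{G_{n+r}}$ described in Corollary~\ref{cor:complement}. Its edges are the pairs $\{i,j\}$ with $i\le n-2$ and $i+r+2\le j\le n+r$. Independent sets of $G_{n+r}$ are exactly cliques of $H$, so $\Delta_{G_{n+r}}$ is the clique complex of $H$. Both directions of Theorem~\ref{L_n is CM} will be read off from $H$. Cohen--Macaulayness will come from Proposition~\ref{prop:shell-cm}. Non-Cohen--Macaulayness will come from non-purity of $\Delta_{G_{n+r}}$, i.e.\ from two maximal cliques of $H$ of different sizes.

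\emph{The case $r=n-4$.} Here Proposition~\ref{heightn-2} gives $\dim R_{n+r}/I_{n+r}=2$, so $\Delta_{G_{n+r}}$ has dimension $1$. It remains to show that $\Delta_{G_{n+r}}$ is pure and connected, i.e.\ that $H$ has no isolated vertex and is connected.
\begin{itemize}
\item For $r=n-4$ we have $r+3=n-1$. Hence vertex $1$ is $H$-adjacent to every $j$ with $n-1\le j\le n+r$.
\item Vertex $n+r$ is $H$-adjacent to every $i\le n-2$, because $n+r>i+r+1$.
\item In particular $\{1,n+r\}$ is an edge, so every vertex lies on an edge through $1$ or through $n+r$.
\end{itemize}
Thus $H$ is connected without isolated vertices. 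So $\Delta_{G_{n+r}}$ is a pure connected $1$-dimensional complex, hence shellable and Cohen--Macaulay by Proposition~\ref{prop:shell-cm}.

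\emph{The case $r\ge n-3$.} Proposition~\ref{heightn-2} still gives dimension $1$, so $H$ has edges, e.g.\ $\{1,n+r\}$. On the other hand, vertex $n-1$ is $H$-adjacent only to indices $i\le n-3-r$, of which there are none when $r\ge n-3$. So $\{n-1\}$ is a $0$-dimensional facet while $\{1,n+r\}$ is a $1$-dimensional facet. Hence $\Delta_{G_{n+r}}$ is not pure, $I_{n+r}$ is not unmixed, and it is not Cohen--Macaulay.

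\emph{The case $1\le r\le n-5$.} The set $\{1,\,r+3,\,2r+5\}$ is a triangle of $H$, since $r+3\le n-2$ and $2r+5\le n+r$. So $\Delta_{G_{n+r}}$ has a facet of dimension at least $2$. It then suffices to exhibit a maximal clique of $H$ of strictly smaller size. I expect this to be the main obstacle, because the natural candidates behave differently according to the size of $r$ relative to $n$.

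\begin{itemize}
\item First I would try the edge $\{n-3-r,\,n-1\}$. A common $H$-neighbour $k$ must satisfy $k\le n-3-r$ (to be adjacent to $n-1$). It must also satisfy $k\le n-5-2r$ or $k\ge n-1$ (to be adjacent to $n-3-r$). So this edge is a maximal clique exactly when $n-5-2r<1$, which covers $r\ge\lceil (n-5)/2\rceil$.
\item For smaller $r$, I would instead build maximal cliques greedily with step $r+2$ and compare sizes. One is the ``left-packed'' clique $\{1,\,1+(r+2),\,1+2(r+2),\dots\}$, extended as far as possible into $\{n-1,\dots,n+r\}$. The other is a ``shifted'' clique starting at a vertex near $r+2$ whose consecutive gaps are forced larger, so that it cannot be extended.
\item The task is to check, via the explicit adjacency rule above, that the shifted clique is maximal yet has one fewer element.
\end{itemize}

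Either way, two facets of different dimensions make $\Delta_{G_{n+r}}$ non-pure, so $I_{n+r}$ is not Cohen--Macaulay for $r\ne n-4$.
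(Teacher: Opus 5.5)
Your cases $r=n-4$ (hubs $1$ and $n+r$ in $H$) and $r\ge n-3$ (the isolated vertex $n-1$ against the facet $\{1,n+r\}$) are the paper's argument. In the range $\lceil (n-5)/2\rceil\le r\le n-5$, your pair ``triangle $\{1,r+3,2r+5\}$ versus the maximal edge $\{n-3-r,n-1\}$'' is correct and complete, and it is a different route from the paper's.

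\textbf{The gap.} The remaining range $1\le r\le\lfloor (n-6)/2\rfloor$ is nonempty once $n\ge 8$; the first instance is $n=8$, $r=1$. There you never define the ``shifted'' clique, and you never check that it is maximal and smaller. This is exactly where care is needed. A clique that starts near $r+2$ and keeps the step $r+2$ can have the same size as the left-packed one: for $n=8$, $r=1$, both $\{1,4,7\}$ and $\{3,6,9\}$ are maximal cliques of $H$ of size $3$. So, as written, the theorem is not proved for these $(n,r)$. The paper treats all of $1\le r\le n-5$ at once. It uses $F_1$, which starts at $1$ with step $r+2$, and $F_2$, which starts at $r+2$ with step $2r+3$ and appends $n+r$ if needed.

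\textbf{How to close it.} First record the following characterization. A set $s_1<\dots<s_t$ is a clique of $H$ iff all consecutive differences are at least $r+2$. It is a maximal clique iff, in addition, $s_1\le r+2$, every consecutive difference is at most $2r+3$, and $s_t\ge n-1$.

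Now let $F_1=\{1+(k-1)(r+2)\}$ be the left-packed maximal clique; for $r\le n-5$ it contains your triangle. Set $F'=(F_1\setminus\{1,r+3\})\cup\{r+2\}$. Its first element is $r+2$, its new gap is $(2r+5)-(r+2)=r+3\in[r+2,2r+3]$, and its tail is that of $F_1$. So $F'$ is a maximal clique with exactly $|F_1|-1$ elements.

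This realizes your ``one fewer element'' idea, works uniformly for all $1\le r\le n-5$, and makes the split at $\lceil (n-5)/2\rceil$ unnecessary. If you use the paper's $F_2$ instead, you still have to compare $|F_1|=\lceil (n+r)/(r+2)\rceil$ with $|F_2|=\lceil (n+r)/(2r+3)\rceil$. The paper asserts that these differ but does not prove it.
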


\begin{proof}
By Proposition~\ref{heightn-2}, $\dim(R_{n+r}/I_{n+r})=2$ for every
$r\ge n-4$, so the independence complex $\Delta_{G_{n+r}}$ (the
Stanley--Reisner complex of $I_{n+r}$, whose faces are the independent
sets of $G_{n+r}$) is one-dimensional. A pure one-dimensional
simplicial complex is shellable if and only if it is connected, and
shellability implies Cohen--Macaulayness; conversely, every
Cohen--Macaulay complex is pure.

We consider the three cases
$r=n-4$, $r>n-4$, and $1\le r<n-4$ separately, justifying every edge
and non-edge exactly as in the Reminder above.

\smallskip
\noindent\emph{Case 1: $r=n-4$ (Cohen--Macaulay).}
We first show $\Delta_{G_{n+r}}$ is pure, i.e.\ every vertex lies on
an edge of $\Delta_{G_{n+r}}$. For $i\le n-2$: since $r=n-4$ gives
$n+r-i \ge n+r-(n-2)=r+2>r+1$, the Corollary~\ref{cor:complement}
gives $x_ix_{n+r}\notin I_{n+r}$, so $\{x_i,x_{n+r}\}$ is independent
and $x_i$ lies on an edge of $\Delta_{G_{n+r}}$. For $i> n-2$ (so
$i\ge n-1$): since $r=n-4$ gives $i-1\ge n-2=r+2>r+1$, the same Lemma
gives $x_1x_i\notin I_{n+r}$, so $\{x_1,x_i\}$ is independent and
$x_i$ lies on an edge. Hence every vertex lies on an edge, and
$\Delta_{G_{n+r}}$ is pure of dimension $1$.

Purity together with connectedness of a one-dimensional complex
implies shellability, and shellability implies Cohen--Macaulayness, so
it remains to show $\Delta_{G_{n+r}}$ is connected: for any two
vertices $i<j$ in $[n+r]$, we exhibit a path using only the
independent pairs established above.
\begin{itemize}
\item If $i<j\le n-2$: $\{x_i,x_{n+r}\}$ and $\{x_j,x_{n+r}\}$ are
both independent, so $(x_i,x_{n+r},x_j)$ is a path in
$\Delta_{G_{n+r}}$.
\item If $n-2<i<j$: $\{x_1,x_i\}$ and $\{x_1,x_j\}$ are both
independent, giving the path $(x_i,x_1,x_j)$.
\item If $i\le n-2<j$: if $i=1$, $\{x_1,x_j\}$ is independent
directly; otherwise $(x_i,x_{n+r},x_1,x_j)$ is a path using both
hubs $x_{n+r}$ and $x_1$.
\end{itemize}
Hence $\Delta_{G_{n+r}}$ is connected and pure, so shellable, so
$I_{n+r}$ is Cohen--Macaulay.

\smallskip
\noindent\emph{Case 2: $r>n-4$ (not Cohen--Macaulay).}
We show $x_{n-1}$ is an isolated vertex of $\Delta_{G_{n+r}}$, i.e.\
$x_{n-1}x_i\in I_{n+r}$ for every $i\ne n-1$ in $[1,n+r]$, by
exhibiting the required $\pi\in\mathrm{Inc}(\mathbb{N})_{n,n+r}$ in
each case.
\begin{itemize}
\item If $1\le i\le n-2$: since $r>n-4$ gives $r\ge n-3$, we have
$i+r+1\ge i+n-2\ge n-1$ for every $i\ge1$, so by
Proposition~\ref{I_n+r} (which exhibits $\pi$ fixing $[1,n-2]$ and
mapping $n-1\mapsto n-1$), $x_ix_{n-1}\in I_{n+r}$.
\item If $i>n-1$: define $\pi:[n]\to[n+r]$ by $\pi(k)=k$ for
$k\le n-1$ and $\pi(n)=i$. Since $i\ge n>n-1$, $\pi$ is strictly
increasing, hence $\pi\in\mathrm{Inc}(\mathbb{N})_{n,n+r}$. Then
$\pi(x_{n-1}x_n)=x_{n-1}x_i$, and since $x_{n-1}x_n\in I_n$, it
follows that $x_{n-1}x_i\in I_{n+r}$.
\end{itemize}
Hence $\{x_{n-1}\}$ is a facet of $\Delta_{G_{n+r}}$, of size $1$. On
the other hand, since $n+r-1>r+1\iff n>2$, the Remark~\ref{cor:complement} gives $x_1x_{n+r}\notin I_{n+r}$: no
$\pi\in\mathrm{Inc}(\mathbb{N})_{n,n+r}$ sends an edge of $L_n$ to
$\{1,n+r\}$. So $\{x_1,x_{n+r}\}$ is a facet of size $2$ (independent,
and already of the maximal dimension given by
Proposition~\ref{heightn-2}). Two facets of different sizes contradict
purity, so $\Delta_{G_{n+r}}$ is not pure, and hence $I_{n+r}$ is not
Cohen--Macaulay.

\smallskip
\noindent\emph{Case 3: $1\le r<n-4$ (not Cohen--Macaulay).} We exhibit two maximal independent sets (facets of
$\Delta_{G_{n+r}}$) of different sizes. Since Cohen--Macaulay
complexes are pure, this shows $I_{n+r}$ is not unmixed, and hence
not Cohen--Macaulay. As in Case 2, every claim that a pair is an edge
is witnessed by Lemma~\ref{I_n+r}, and every claim
that a pair is \emph{not} an edge is justified by the non-existence
of such a witness, i.e.\ by the Corollary~\ref{cor:complement}.
 
\smallskip
\emph{A densely packed facet.} The vertex $x_1$ is connected to every
vertex from $x_2$ through $x_{2+r}$, so $\{x_1,x_{3+r}\}$ is an
independent set. In turn $x_{3+r}$ is connected to every vertex up to
$x_{4+2r}$, so $\{x_1,x_{3+r},x_{5+2r}\}$ is independent; continuing
in this way for as long as the vertices stay within $[1,n+r]$
produces
\[
F_1 \;=\; \big\{\, x_{1+(k-1)(r+2)} \ :\ 1\le k\le t_1 \,\big\},
\qquad
t_1 = \Big\lfloor \frac{n+r-1}{r+2}\Big\rfloor + 1,
\]
where $t_1$ is the largest $k$ for which $1+(k-1)(r+2)\le n+r$.
Consecutive elements of $F_1$ differ by exactly $r+2$, so $F_1$ is
independent by the Lemma of Remark~\ref{rem:noedge}; it is maximal
since no point can be inserted before the first element (there is
none), between two consecutive elements (their gap is already the
minimum $r+2$, so any inserted point would be adjacent to one of
them by Lemma~\ref{I_n+r}), or after the last element
(its distance to $n+r$ is less than $r+2$, again forcing an edge, by
maximality of $t_1$).
 
\smallskip
\emph{A sparsely packed facet.} Using the same recursive idea with
the \emph{smallest} independent gap $r+2$ throughout will not in
general produce a facet of a different size from $F_1$: for instance,
if $n=8,\ r=1$, simply starting the same recursion one step later, at
$x_{2+r}=x_3$, gives $\{x_3,x_6,x_9\}$, which has the same size $3$ as
$F_1=\{x_1,x_4,x_7\}$. To force a genuinely different size, we instead
use gaps close to the \emph{largest} value that still prevents a
further point from being inserted between two chosen vertices,
namely $2r+3$ (just short of $2(r+2)$). Start at $x_{r+2}$ -- the
largest starting point for which no vertex before it can be added,
since $\{x_i,x_{r+2}\}$ is an edge for every $i\le r+1$ -- and proceed
in steps of $2r+3$:
\[
x_{r+2},\ x_{(r+2)+(2r+3)},\ x_{(r+2)+2(2r+3)},\ \dots
\]
Let $t_2$ be the largest integer with $(r+2)+(t_2-1)(2r+3)\le n+r$,
and let $L=(r+2)+(t_2-1)(2r+3)$ be the last such term. If
$(n+r)-L>r+1$, append $x_{n+r}$ as one further element; call the
resulting set $F_2$, of size $t_2'\in\{t_2,t_2+1\}$. Every gap in
$F_2$ is at least $r+2$ (independence, by the Corollary~\ref{cor:complement}) and strictly less than $2(r+2)$ (no room for
an inserted point, by Lemma~\ref{I_n+r}), the gap
before the first element is exactly $r+1$ (blocking any point before
it), and the trailing gap is at most $r+1$ by construction; hence
$F_2$ is a facet.  
Since $F_1$ and $F_2$ are facets of different sizes,
$\Delta_{G_{n+r}}$ is not pure, so $I_{n+r}$ is not unmixed, and
therefore not Cohen--Macaulay.

\end{proof}
\begin{remark}
The two constructions above are illustrated concretely by $n=8,\
r=1$ (so $n+r=9$ and $r+2=3$). The dense construction gives
$F_1=\{x_1,x_4,x_7\}$, of size $3$. Simply shifting this same
construction by one step, to $\{x_3,x_6,x_9\}$, also has size $3$ --
this is the phenomenon noted above, where a shifted copy of the
minimal-gap progression fails to change the size. The sparse
construction, using the larger gap $2r+3=5$, instead gives
$F_2=\{x_3,x_8\}$, of size $2$, correctly distinguishing the two
facet sizes as required.
\end{remark}

\subsection{Complement of line under ${\rm{Inc}}(\mathbb{N})$:}

We now turn to $L_n^c$, the complement of the line graph $L_n$ studied
in the previous subsection: two vertices are adjacent in $L_n^c$
exactly when they are \emph{not} adjacent in $L_n$. Throughout this
subsection, for $n\ge3$ we consider $I_n$ to be the edge ideal of the
complement of the line graph
\[
L_n^c := \big(\, \{i,j\}\ \mid\ 1\le i<n,\ \ i+1<j\le n \,\big\},\ [n] \big),
\]
i.e.\ $\{i,j\}$ ($i<j$) is an edge of $L_n^c$ precisely when $j>i+1$.
The restriction $n\ge3$ ensures $L_n^c$ has at least one edge: for
$n=2$ the only pair $\{1,2\}$ satisfies $j=i+1$, so it is excluded,
and $L_2^c$ is edgeless.\\
As in the previous subsection, we set
\[
I_{n+r} := \mathrm{Inc}(\mathbb{N})_{n,n+r}(I_n),
\]
let $G_{n+r}$ denote the corresponding graph of $I_{n+r}$, and let
$\overline{G_{n+r}}$ denote the complement graph of $G_{n+r}$. These
notations are used consistently throughout this subsection, in
parallel with the line-graph case: we first describe the generators
of $I_{n+r}$ (and hence the edge set of 
$\overline{G_{n+r}}$), and then characterize when the quotient
$R_{n+r}/I_{n+r}$ is Cohen--Macaulay.

\begin{lemma}\label{lema:Lnc-generators}
Let $n\ge3$ and $r\ge0$. The ideal $I_{n+r}$ is generated in the
following form:
\[
I_{n+r} \;=\; \big(\, x_ax_b \ :\ 1\le a<b\le n+r,\ \ b-a\ge2 \,\big).
\]
Equivalently, the graph  associated to $I_{n+r}$ is exactly
$L_{n+r}^c$, the complement of the line graph on $[n+r]$; that is,
$I_{n+r}=I(L_{n+r}^c)$.
\end{lemma}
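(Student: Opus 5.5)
We prove the two inclusions between $I_{n+r}=\mathrm{Inc}(\mathbb{N})_{n,n+r}(I(L_n^c))$ and $I(L_{n+r}^c)$ separately, working at the level of graphs. By the remark following Definition~\ref{def:incaction}, $\{i,j\}$ with $i<j$ is an edge of $G_{n+r}$ if and only if there are $\pi\in\mathrm{Inc}(\mathbb{N})_{n,n+r}$ and an edge $\{a,b\}$ of $L_n^c$, $a<b$, with $\pi(a)=i$ and $\pi(b)=j$. So it suffices to show that the pairs realized this way are exactly those with $j-i\ge2$. The case $r=0$ is trivial, since the only increasing map $[n]\to[n]$ is the identity; so assume $r\ge1$.

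\emph{Inclusion $I_{n+r}\subseteq I(L_{n+r}^c)$.} Let $\{a,b\}$ be an edge of $L_n^c$, so $b\ge a+2$. Because $\pi$ is strictly increasing, it satisfies $\pi(k+1)\ge\pi(k)+1$, and hence $\pi(b)-\pi(a)\ge b-a\ge2$. Therefore every generator $\pi(x_ax_b)=x_{\pi(a)}x_{\pi(b)}$ has index gap at least $2$, and lies in $I(L_{n+r}^c)$.

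\emph{Inclusion $I(L_{n+r}^c)\subseteq I_{n+r}$.} Fix $1\le i<j\le n+r$ with $j-i\ge2$. We must produce $\pi$ and an edge $\{a,b\}$ of $L_n^c$ with $\pi(a)=i$ and $\pi(b)=j$. The constraints on $\pi$ are:
\begin{itemize}
\item $a-1$ increasing values must fit below $i$, so $a\le i$;
\item $n-b$ values must fit above $j$, so $n-b\le n+r-j$;
\item $b-a-1$ values must fit strictly between $i$ and $j$, so $b-a-1\le j-i-1$;
\item $b\ge a+2$.
\end{itemize}

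We choose $a$ and $b$ as follows. Take $a=\min(i,n-2)$, which is at least $1$ since $n\ge3$. Then take $b=\max(a+2,\;j-r)$ and check it:
\begin{itemize}
\item \emph{$b\le n$.} We have $a+2\le n$, and $j-r\le n$.
\item \emph{The upper constraint holds.} Since $b\ge j-r$, we get $n-b\le n+r-j$.
\item \emph{The middle constraint holds.} We need $b-a\le j-i$. If $b=a+2$, this follows from $j-i\ge2$. If $b=j-r$, we need $j-r-a\le j-i$, i.e.\ $i-a\le r$. This is clear when $a=i$. When $a=n-2<i$, it reads $i\le n-2+r$, which follows from $i\le j-2\le n+r-2$.
\end{itemize}

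With $a,b$ fixed, define $\pi$ explicitly:
\begin{itemize}
\item $\pi(k)=k$ for $k<a$, and $\pi(a)=i$;
\item $\pi(a+m)=i+m$ for $1\le m<b-a$;
\item $\pi(b)=j$, and $\pi(b+m)=j+m$ for $m\ge1$.
\end{itemize}
The inequalities just verified are exactly what make $\pi$ strictly increasing with $\pi(n)=j+n-b\le n+r$. Then $\pi(x_ax_b)=x_ix_j$, so $x_ix_j\in I_{n+r}$.

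The main obstacle is this choice of $(a,b)$ near the two ends of $[n+r]$. One cannot simply take $\{a,b\}=\{1,3\}$, since $j$ may be too large relative to $r$, nor $\{n-2,n\}$, since $i$ may be too small. The clamped choices $a=\min(i,n-2)$ and $b=\max(a+2,j-r)$ are designed to handle both ends simultaneously. The bookkeeping inequality $i\le n+r-2$ is where the hypothesis $j-i\ge2$ enters a second time.
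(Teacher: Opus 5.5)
Your proof is correct and follows the same strategy as the paper: you get $\subseteq$ from strict monotonicity of $\pi$, and $\supseteq$ by exhibiting an explicit increasing $\pi$ that carries an edge of $L_n^c$ onto the target pair. The only difference is the choice of witness edge. You use a single clamped choice $a=\min(i,n-2)$, $b=\max(a+2,j-r)$, checked against the three room-counting inequalities. The paper instead splits on the gap $d$ of the target pair: it uses the identity map when the pair already lies in $[n]$, the edge $\{n-d,n\}$ when $d\le n-1$ and the pair does not lie in $[n]$, and the edge $\{1,n\}$ when $d\ge n-1$.
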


\begin{proof}
We show the two inclusions separately.
 
\smallskip
\noindent($\subseteq$) Every generator of $I_{n+r}$ is of the form
$\pi(x_ix_j)=x_{\pi(i)}x_{\pi(j)}$ for some
$\pi\in\mathrm{Inc}(\mathbb{N})_{n,n+r}$ and some edge $\{i,j\}$ of
$L_n^c$, i.e.\ $1\le i<n$ and $j>i+1$. Since $\pi$ is strictly
increasing, $\pi(j)-\pi(i)\ge j-i\ge2$. Setting $a=\pi(i)$,
$b=\pi(j)$, this gives $b-a\ge2$, as required.
 
\smallskip
\noindent($\supseteq$) Let $1\le a<b\le n+r$ with $d:=b-a\ge2$; we
exhibit $\pi\in\mathrm{Inc}(\mathbb{N})_{n,n+r}$ and an edge $\{i,j\}$
of $L_n^c$ with $\pi(i)=a$ and $\pi(j)=b$.
 
\emph{{\bf Case I}: $2\le d\le n-1$.} We split into two sub-cases according to
whether $b\le n$ or $b>n$.
 
If $b\le n$, then $\{a,b\}$ is already an edge of $L_n^c$ (since
$a<b\le n$ and $b-a=d\ge2$), so the identity map
$\pi:[n]\to[n+r]$, $\pi(k)=k$, realizes $x_ax_b=\pi(x_ax_b)$
directly.
 
If $b>n$, let $i=n-d$; since $d\le n-1$, $i\ge1$, and since $d\ge2$,
$\{i,n\}$ is an edge of $L_n^c$. Define $\pi:[n]\to[n+r]$ by:
\begin{itemize}
\item $\pi(i)=a$ and $\pi(n)=b$;
\item the $d-1$ domain points $i+1,\dots,n-1$ map, in increasing
order, onto the $d-1$ integers $a+1,\dots,b-1$;
\item the $i-1$ domain points $1,\dots,i-1$ map, in increasing
order, onto any $i-1$ values in $[1,a-1]$, which is possible since
$i-1=n-d-1\le a-1$, as $b>n$ gives $a=b-d>n-d=i$.
\end{itemize}
No domain points remain after position $n$, so no further condition
is needed. This $\pi$ is strictly increasing, hence lies in
$\mathrm{Inc}(\mathbb{N})_{n,n+r}$, and $\pi(x_ix_n)=x_ax_b$.
 
\emph{{\bf Case II}: $d\ge n-1$.} Let $i=1$ and $j=n$; since $n\ge3$, $j>i+1$,
so $\{1,n\}$ is an edge of $L_n^c$. Define
$\pi:[n]\to[n+r]$ by $\pi(1)=a$, $\pi(n)=b$, and let the remaining
$n-2$ domain points $2,\dots,n-1$ map, in increasing order, onto any
$n-2$ of the $d-1$ integers strictly between $a$ and $b$; this is
possible since $d-1\ge n-2$. No points remain before position $1$ or
after position $n$, so no further condition is needed. This $\pi$ is
strictly increasing, hence lies in $\mathrm{Inc}(\mathbb{N})_{n,n+r}$,
and $\pi(x_1x_n)=x_ax_b$.
 
In both cases $x_ax_b\in I_{n+r}$, completing the proof.
\end{proof}

\begin{proposition}\label{L_n^cheight}
Let $n\ge3$, and let $I_n$ be the edge ideal of $L_n^c$. Then
$\operatorname{ht}(I_n)=n-2$, and $I_n$ is unmixed.
\end{proposition}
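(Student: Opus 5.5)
The plan is to compute the maximal independent sets of $L_n^c$ directly and then apply \eqref{eq:height-vc}. In $L_n^c$ two vertices $i<j$ are adjacent exactly when $j-i\ge2$. Hence a set $F\subseteq[n]$ is independent iff every two of its elements differ by at most $1$. Equivalently, $F$ is either a singleton $\{i\}$ or a consecutive pair $\{i,i+1\}$: three distinct elements $a<b<c$ always satisfy $c-a\ge2$.

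Next, I will determine which of these independent sets are maximal. Every singleton $\{i\}$ extends to $\{i,i+1\}$ (if $i<n$) or to $\{i-1,i\}$ (if $i>1$), and $n\ge3\ge2$ ensures one of these exists. So no singleton is maximal. The maximal independent sets, i.e.\ the facets of $\Delta_{L_n^c}$, are therefore exactly the $n-1$ pairs $\{i,i+1\}$ for $1\le i\le n-1$, all of size $2$. Thus $\Delta_{L_n^c}$ is the path $L_n$ itself, viewed as a one-dimensional complex, and it is pure.

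By the correspondence between facets and complements of minimal vertex covers, every minimal vertex cover of $L_n^c$ has size $n-2$. Hence every minimal prime $P_C$ of $I_n$ has height $n-2$, so $\operatorname{ht}(I_n)=\tau(L_n^c)=n-2$ and $\dim R_n/I_n=\alpha(L_n^c)=2$.

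For unmixedness, I use the fact that $I_n$ is squarefree, hence radical. Its associated primes are therefore exactly its minimal primes, and all of these have height $n-2$. This is the purity criterion stated in the Background.

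There is no real obstacle here. The only point needing care is the observation that independent sets have diameter at most $1$, which rules out independent sets of size $3$ and forces every facet to be a consecutive pair. As a remark, the path $L_n$ is connected, so Proposition~\ref{prop:shell-cm} already yields Cohen--Macaulayness. Combined with Lemma~\ref{lema:Lnc-generators}, this applies to every $I_{n+r}$ as well.
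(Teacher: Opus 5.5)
Your proof is correct and follows essentially the same route as the paper. You show $\alpha(L_n^c)=2$ through the observation that independent sets have diameter at most $1$, which is the same fact as the paper's argument via cliques of $L_n$; you then rule out singleton facets, so every facet is a consecutive pair $\{i,i+1\}$, and conclude height $n-2$ and unmixedness from radicality of the squarefree ideal $I_n$.
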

 
\begin{proof}
Recall $\operatorname{ht}(I_n)
= n - \alpha(L_n^c)$, where $\alpha(L_n^c)$ denotes the independence
number of $L_n^c$. It thus suffices to show $\alpha(L_n^c)=2$.
 
Since $L_n^c$ is the complement of $L_n$, a subset of vertices is
independent in $L_n^c$ if and only if it forms a clique in $L_n$.
As $L_n$ is the path on $[n]$ with edges $\{i,i+1\}$, its cliques are
exactly its vertices and its edges, so it suffices to show that the
maximum clique size of $L_n$ is $2$.
 
\emph{Lower bound.} Any edge $\{i,i+1\}$ of $L_n$ is a clique of size
$2$, so $\alpha(L_n^c)\ge2$.
 
\emph{Upper bound.} Suppose, for contradiction, that $L_n$ has a
clique $\{i,j,k\}$ with $i<j<k$. Since $i,j,k$ are distinct integers
with $i<j<k$, we have $k-i\ge2$, so $\{i,k\}$ cannot be an edge of
$L_n$, as edges of $L_n$ only join consecutive integers. This
contradicts $\{i,j,k\}$ being a clique. Hence $L_n$ has no clique of
size $3$, so $\alpha(L_n^c)\le2$.
 
Combining the two bounds, $\alpha(L_n^c)=2$, and therefore
$\operatorname{ht}(I_n) = n-\alpha(L_n^c) = n-2$.
 
\smallskip
\noindent\textbf{Unmixedness.} Since $I_n$ is a squarefree monomial
ideal, it is radical, so its associated primes coincide with its
minimal primes; hence $I_n$ is unmixed if and only if every minimal
prime has the same height, equivalently every facet of the
independence complex $\Delta_{L_n^c}$ has the same size. We show
every facet has size exactly $2$.
 
By the upper bound above, no independent set of $L_n^c$ has size
$\ge3$, so every facet has size $1$ or $2$. It remains to rule out
facets of size $1$, i.e.\ isolated vertices. A pair $\{a,b\}$
($a<b$) is independent in $L_n^c$ exactly when it is \emph{not} an
edge, i.e.\ when $b\le a+1$; since $b>a$, this means $b=a+1$. Thus
for any $i\in[n]$: if $i<n$, then $\{i,i+1\}$ is independent and
properly contains $\{i\}$; if $i=n$, then $\{n-1,n\}$ is independent
and properly contains $\{n\}$. Either way, $\{i\}$ is not maximal.
 
Hence every facet of $\Delta_{L_n^c}$ has size exactly $2$, so
$\Delta_{L_n^c}$ is pure, and therefore $I_n$ is unmixed.
\end{proof}


\begin{theorem}\label{thm:Lnc-CM}
Let $I_n$ be the edge ideal of $L_n^c$. For any $n\ge3$ and $r\ge0$,
$\mathrm{Inc}(\mathbb{N})_{n,n+r}(I_n)$ is Cohen--Macaulay.
\end{theorem}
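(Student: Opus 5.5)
The plan is to reduce everything to the single graph $L_{n+r}^c$ and then use the one-dimensional shellability criterion of Proposition~\ref{prop:shell-cm}. By Lemma~\ref{lema:Lnc-generators}, for every $n\ge3$ and $r\ge0$ we have $\mathrm{Inc}(\mathbb{N})_{n,n+r}(I_n)=I(L_{n+r}^c)$. The question is therefore just whether $R_{n+r}/I(L_m^c)$ is Cohen--Macaulay, where $m=n+r\ge3$. Since $I(L_m^c)=I_{\Delta}$ with $\Delta=\Delta_{L_m^c}$, it suffices to show that the independence complex $\Delta_{L_m^c}$ is Cohen--Macaulay over $k$.

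Next I would identify $\Delta_{L_m^c}$ explicitly. Proposition~\ref{L_n^cheight}, applied with $n$ replaced by $m$, gives $\alpha(L_m^c)=2$ and shows that every facet has size exactly $2$. So $\Delta_{L_m^c}$ is a pure one-dimensional complex. The proof of that proposition also shows that a pair $\{a,b\}$ with $a<b$ is independent exactly when $b=a+1$. Hence the facets are precisely $\{1,2\},\{2,3\},\dots,\{m-1,m\}$, and $\Delta_{L_m^c}$ is the path $L_m$ itself, viewed as a one-dimensional simplicial complex. This is the ``reproduces the original graph'' phenomenon mentioned in the introduction.

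Finally I would verify connectedness: consecutive vertices $i,i+1$ are joined by a facet, so any two vertices are linked by a chain of facets. By Proposition~\ref{prop:shell-cm}, a pure connected one-dimensional complex is shellable. Explicitly, the order $\{1,2\},\{2,3\},\dots,\{m-1,m\}$ works: each new facet $\{i,i+1\}$ meets the union of the previous ones exactly in the vertex $\{i\}$, which is pure of dimension $0$. Shellable complexes are Cohen--Macaulay, so $R_{n+r}/I_{n+r}$ is Cohen--Macaulay.

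The main obstacle was already handled in Lemma~\ref{lema:Lnc-generators}: it shows that the chain contains exactly the pairs with $b-a\ge2$ and never adds a consecutive pair, so the complement graph does not collapse. The only remaining point to check is the boundary case $r=0$. There the statement reduces to $I_n$ itself, which is covered by Proposition~\ref{L_n^cheight} together with the same connectedness argument, so no separate treatment is needed.
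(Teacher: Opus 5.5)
Your proposal is correct and matches the paper's proof step for step. You reduce to $I(L_m^c)$ via Lemma~\ref{lema:Lnc-generators}, read off the facets $\{i,i+1\}$ from Proposition~\ref{L_n^cheight}, and conclude from purity plus connectedness via Proposition~\ref{prop:shell-cm}. Your explicit shelling order $\{1,2\},\{2,3\},\dots,\{m-1,m\}$ is a small addition that makes the shellability step self-contained.
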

 
\begin{proof}
Set $m=n+r\ge3$. By Lemma~\ref{lema:Lnc-generators},
$I_{n+r}=\mathrm{Inc}(\mathbb{N})_{n,n+r}(I_n)=I(L_m^c)$, so it
suffices to show $I(L_m^c)$ is Cohen--Macaulay. We do so by showing
that its independence complex $\Delta_{L_m^c}$ is shellable.
 
By Proposition~\ref{L_n^cheight} (applied with $m$ in place of $n$),
$\operatorname{ht}(I(L_m^c))=m-2$ and $I(L_m^c)$ is unmixed; the
latter was established there precisely by showing that every facet
of $\Delta_{L_m^c}$ has size exactly $2$, namely
\[
\text{facets of } \Delta_{L_m^c} \;=\; \big\{\, \{i,i+1\} \ :\ 1\le i\le m-1 \,\big\}.
\]
In particular $\Delta_{L_m^c}$ is pure of dimension $1$. It remains
to show $\Delta_{L_m^c}$ is \emph{connected}, since a pure
one-dimensional simplicial complex is shellable if and only if it is
connected, and shellability implies Cohen--Macaulayness.
 
\smallskip
\noindent\textbf{Connectedness.} Since $\Delta_{L_m^c}$ is pure of
dimension $1$, it is determined as a graph by its facets: its vertex
set is $[m]$ and its edge set is exactly the set of facets listed
above, namely $\{i,i+1\}$ for $1\le i\le m-1$. This graph is
precisely the path $L_m$ on $[m]$. As a path, $L_m$ is connected: for
any two vertices $a<b$ in $[m]$, the sequence of facets
\[
\{a,a+1\},\ \{a+1,a+2\},\ \dots,\ \{b-1,b\}
\]
exhibits a walk from $a$ to $b$ within $\Delta_{L_m^c}$. Hence
$\Delta_{L_m^c}$ is connected.
 
\smallskip
Being pure, connected, and one-dimensional, $\Delta_{L_m^c}$ is
shellable, and therefore Cohen--Macaulay. Consequently $I(L_m^c)$ is
Cohen--Macaulay, and so
$\mathrm{Inc}(\mathbb{N})_{n,n+r}(I_n)=I_{n+r}$ is Cohen--Macaulay,
for every $n\ge3$ and $r\ge0$.
\end{proof}
\subsection{Complete graphs and cyclic graphs under $\mathrm{Inc}(\mathbb{N})$}
We now turn to the two remaining families named in the introduction to
this section: the complete graph $K_n$ and the cyclic graph $C_n$,
both on vertex set $[n]$. Unlike the line-graph case and its
complement, these two families are not independent of one another
under $\mathrm{Inc}(\mathbb{N})$: as we show below, the
$\mathrm{Inc}(\mathbb{N})$-invariant chain generated by $C_n$
coincides with the chain generated by $K_n$ once $r$ is large enough
relative to $n$. We therefore treat $K_n$ first, establishing its
generators and Cohen--Macaulayness (Lemma~\ref{lem:Kn-generators} and
the remark following it), and then treat $C_n$
(Lemma~\ref{prop:Cn-generators}), whose Cohen--Macaulayness
follows immediately by reduction to the complete-graph case
(Theorem~\ref{Cn-CM}).
\begin{lemma}\label{lem:Kn-generators}
Let $n\ge2$, and let $K_n$ be the complete graph on $[n]$. Then
\[
\mathrm{Inc}(\mathbb{N})_{n,n+r}(I(K_n)) = I(K_{n+r}).
\]
\end{lemma}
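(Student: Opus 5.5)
We prove the two inclusions separately, in the same way as Lemma~\ref{lema:Lnc-generators}; since both sides are squarefree monomial ideals generated in degree two, it suffices to compare generators.

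For ($\subseteq$), every generator of $\mathrm{Inc}(\mathbb{N})_{n,n+r}(I(K_n))$ has the form $\pi(x_ix_j)=x_{\pi(i)}x_{\pi(j)}$ for some $\pi\in\mathrm{Inc}(\mathbb{N})_{n,n+r}$ and some $1\le i<j\le n$. Because $\pi$ is strictly increasing, $\pi(i)<\pi(j)$ are distinct elements of $[n+r]$. Hence $x_{\pi(i)}x_{\pi(j)}$ is a generator of $I(K_{n+r})$.

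For ($\supseteq$), fix $1\le a<b\le n+r$. We must produce $\pi\in\mathrm{Inc}(\mathbb{N})_{n,n+r}$ and an edge $\{i,j\}$ of $K_n$ with $\pi(i)=a$ and $\pi(j)=b$. Since every pair in $[n]$ is an edge of $K_n$, there is complete freedom in choosing $i<j$; the only constraint is that the remaining points fit on either side.

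Concretely, we need $i-1\le a-1$ points below $a$, $j-i-1\le b-a-1$ points strictly between $a$ and $b$, and $n-j\le n+r-b$ points above $b$. We take
\[
i=\min(a,\,n-1), \qquad j=i+\min\bigl(b-a,\;n-i\bigr).
\]
Then $i\le a$ is automatic, and $1\le j-i\le b-a$, so $i<j\le n$.

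Verifying the third inequality $n-j\le n+r-b$, i.e.\ $b-j\le r$, is the only step needing care. We split into two cases.

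\begin{itemize}
\item If $j=i+(b-a)$, then $b-j=a-i$. This is $0$ when $i=a$, and otherwise equals $a-n+1\le (n+r-1)-n+1=r$, using $a\le b-1\le n+r-1$.
\item If $j=n$, then $b-j=b-n\le r$ trivially.
\end{itemize}

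With these choices, $\pi$ is defined by sending $1,\dots,i-1$ increasingly into $[1,a-1]$, $i\mapsto a$, $i+1,\dots,j-1$ increasingly into $(a,b)$, $j\mapsto b$, and $j+1,\dots,n$ increasingly into $(b,n+r]$. This map is strictly increasing, so it lies in $\mathrm{Inc}(\mathbb{N})_{n,n+r}$, and $\pi(x_ix_j)=x_ax_b$.

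The case $r=0$ is trivial.
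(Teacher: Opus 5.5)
Your proof is correct and follows essentially the same route as the paper: the $\subseteq$ inclusion is identical, and for $\supseteq$ the paper likewise sends two domain points to $a$ and $b$ and distributes the remaining $n-2$ points among $[1,a-1]$, $(a,b)$, $(b,n+r]$, merely asserting that suitable sizes $p_1+p_2+p_3=n-2$ exist because the capacities sum to $n+r-2\ge n-2$. Your choice $i=\min(a,n-1)$, $j=i+\min(b-a,n-i)$ is one explicit such distribution (with $p_1=i-1$, $p_2=j-i-1$, $p_3=n-j$), your check of $b-j\le r$ is correct, and the separate remark about $r=0$ is unnecessary since your argument already covers that case.
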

\begin{proof}
($\subseteq$) Every generator of $\mathrm{Inc}(\mathbb{N})_{n,n+r}(I(K_n))$
is $\pi(x_ix_j)=x_{\pi(i)}x_{\pi(j)}$ for some
$\pi\in\mathrm{Inc}(\mathbb{N})_{n,n+r}$ and some pair $i<j$ in
$[n]$. Since $\pi$ is increasing, $\pi(i)<\pi(j)$, so this is a
generator of $I(K_{n+r})$, as every pair is an edge of $K_{n+r}$.
 
($\supseteq$) Let $1\le a<b\le n+r$. Since every pair is an edge of
$K_n$, it suffices to place any two domain points at $a$ and $b$: we
need to split the remaining $n-2$ domain points into three groups,
mapping increasingly into $[1,a-1]$, $(a,b)$, and $(b,n+r]$, of
sizes $a-1$, $b-a-1$, and $n+r-b$ respectively. This is possible
since these three sizes sum to $n+r-2\ge n-2$, so some choice of
nonnegative group sizes $p_1+p_2+p_3=n-2$ with $p_1\le a-1$,
$p_2\le b-a-1$, $p_3\le n+r-b$ exists. Taking $i=p_1+1$ and
$j=p_1+p_2+2$ gives $\pi\in\mathrm{Inc}(\mathbb{N})_{n,n+r}$ with
$\pi(x_ix_j)=x_ax_b$, so $x_ax_b\in\mathrm{Inc}(\mathbb{N})_{n,n+r}(I(K_n))$.
\end{proof}

\begin{lemma}\label{lem:Kn-CM}
For every $n\ge1$, $K_n$ is Cohen--Macaulay, i.e.\ $I(K_n)$ is a
Cohen--Macaulay ideal.
\end{lemma}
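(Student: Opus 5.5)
The plan is to identify the independence complex $\Delta_{K_n}$ explicitly and observe that it is zero-dimensional, hence trivially Cohen--Macaulay. In $K_n$ every pair of distinct vertices is an edge. So an independent set contains at most one vertex, and the faces of $\Delta_{K_n}$ are exactly $\varnothing$ and the singletons $\{i\}$, $i\in[n]$. Thus $\Delta_{K_n}$ is the complex consisting of $n$ isolated points. It is pure of dimension $0$, every facet having size $1$. By \eqref{eq:height-vc} this gives $\alpha(K_n)=1$, hence $\operatorname{ht}I(K_n)=n-1$ and $\dim R_n/I(K_n)=1$.

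For Cohen--Macaulayness I would argue in one of two equivalent ways.

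\emph{Via shellability.} Any ordering $\{1\},\dots,\{n\}$ of the facets is a shelling. For $i\ge2$, the intersection $\big(\bigcup_{j<i}\overline{\{j\}}\big)\cap\overline{\{i\}}$ equals $\{\varnothing\}$, which is pure of dimension $-1=\dim\{i\}-1$. Proposition~\ref{prop:shell-cm} then gives Cohen--Macaulayness.

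\emph{Directly.} Since $\dim R_n/I(K_n)=1$, it suffices to find a nonzerodivisor of degree one. The element $\ell=x_1+\cdots+x_n$ works. The associated primes of the radical ideal $I(K_n)$ are its minimal primes $P_i=(x_j: j\ne i)$, and $\ell\notin P_i$ because $\ell\equiv x_i \pmod{P_i}$. Hence $\ell$ is regular on $R_n/I(K_n)$, so the depth is at least $1=\dim$.

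The degenerate case $n=1$ needs a separate remark. There $I(K_1)=0$, so $R_1/I(K_1)=k[x_1]$, a polynomial ring, which is Cohen--Macaulay.

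There is no real obstacle. The only point needing care is the convention for shellability of $0$-dimensional complexes: the intersection must be the empty face $\{\varnothing\}$ of dimension $-1$, not the void complex. The regular-element argument avoids this issue entirely, so I would present that one as the main proof.
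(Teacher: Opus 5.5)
Your proposal is correct. Your shellability variant is the paper's proof. The paper notes that the facets of $\Delta_{K_n}$ are the $n$ singletons and that any two of them are disjoint, so every ordering is a shelling. That is your computation that each intersection is $\{\varnothing\}$, pure of dimension $-1$, followed by Proposition~\ref{prop:shell-cm}.

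The argument you would present as the main proof takes a genuinely different route. You show $\operatorname{depth}\ge 1=\dim$ directly: $\ell=x_1+\cdots+x_n$ satisfies $\ell\equiv x_i \pmod{P_i}$, so it lies in none of the associated primes $P_i=(x_j : j\ne i)$ of the radical ideal $I(K_n)$. This argument is self-contained. It does not use Proposition~\ref{prop:shell-cm}, which the paper quotes without proof, or the empty-face convention. Because $\ell$ is explicit, it also works over every field without any prime-avoidance argument. It makes your separate treatment of $n=1$ unnecessary, since there $P_1=(0)$ and $\ell=x_1$ is already regular on $k[x_1]$.

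The paper's shellability phrasing gains uniformity with the rest of the paper. There, Cohen--Macaulayness is always deduced from shellability of a pure, low-dimensional independence complex.
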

 
\begin{proof}
Since every two vertices of $K_n$ are adjacent, the only independent
sets are the singletons $\{1\},\dots,\{n\}$, so $\Delta_{K_n}$ is
pure of dimension $0$. Any two distinct singletons are disjoint, so
$\Delta_{K_n}$ is shellable in any order of its facets, hence
Cohen--Macaulay.
\end{proof}

We take $C_n$ to be the cyclic graph on $[n]$,
\[
C_n := \big(\, \{i,i+1\}\ \mid\ i\in[n-1]\,\big\} \cup \big\{\{1,n\}\big\},\ [n] \big), \qquad n\ge3,
\]
i.e.\ $C_n$ is the path $L_n$ together with one additional edge
closing it into a cycle.
 
\begin{lemma}\label{prop:Cn-generators}
Let $n\ge3$ and $r\ge n-3$. Then
\[
\mathrm{Inc}(\mathbb{N})_{n,n+r}(I(C_n)) = I(K_{n+r}).
\]
\end{lemma}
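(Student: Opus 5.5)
The inclusion $\subseteq$ is immediate: every generator of $\mathrm{Inc}(\mathbb{N})_{n,n+r}(I(C_n))$ has the form $x_{\pi(i)}x_{\pi(j)}$ with $\pi(i)\neq\pi(j)$, and $I(K_{n+r})$ contains every squarefree quadratic monomial. The work is in $\supseteq$. Fix $1\le a<b\le n+r$ and set $d=b-a\ge1$. We must produce $\pi\in\mathrm{Inc}(\mathbb{N})_{n,n+r}$ and an edge $\{i,j\}$ of $C_n$ with $\{\pi(i),\pi(j)\}=\{a,b\}$. As in the proof of Lemma~\ref{lem:Kn-generators}, we place two domain points at $a$ and $b$ and distribute the other $n-2$ points increasingly into the three gaps $[1,a-1]$, $(a,b)$, $(b,n+r]$. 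The gaps have sizes $a-1$, $d-1$ and $n+r-b$. The constraint now is that the two chosen domain points must be adjacent in $C_n$.

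We use two types of edges.

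\emph{Path edges $\{i,i+1\}$.} Here the middle gap receives no domain points. So we need $i-1\le a-1$ and $n-i-1\le n+r-b$, i.e.\ some $i\in[1,n-1]$ with $b-r-1\le i\le a$. Such an $i$ exists iff $\max(1,b-r-1)\le\min(a,n-1)$. Since $a\ge1$ and $b\le n+r$, this reduces to $d\le r+1$, together with $b-r-1\le n-1$, which always holds. So every pair with $b-a\le r+1$ is covered; this is essentially Case I of Lemma~\ref{I_n+r}.

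\emph{The closing edge $\{1,n\}$.} Set $\pi(1)=a$ and $\pi(n)=b$. All $n-2$ remaining points must then go strictly between $a$ and $b$, which requires $d-1\ge n-2$, i.e.\ $d\ge n-1$. This is the same construction as Case II of Lemma~\ref{lema:Lnc-generators}.

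The two ranges are $d\le r+1$ and $d\ge n-1$. They cover all $d\ge1$ precisely when $r+1\ge n-2$, i.e.\ $r\ge n-3$, which is exactly the hypothesis. Hence every $x_ax_b$ lies in the chain ideal.

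The only delicate step is verifying the interval condition for path edges carefully. There are boundary cases to check: $a=1$ forces $i=1$, which needs $b\le r+2$, consistent with $d\le r+1$. When $b$ is close to $n+r$, $i$ can go up to $\min(a,n-1)$. We must also check that each gap has enough room, and that taking $\pi$ to be the identity shift on the left block and a consecutive placement on the right block is strictly increasing. I would write out the explicit map: $\pi(k)=k$ for $k<i$, $\pi(i)=a$, $\pi(i+1)=b$, and $\pi(i+1+m)=b+m$ for $m\ge 1$. For this to be well defined we need $i\le a$ and $b+(n-i-1)\le n+r$, which are exactly the inequalities above.
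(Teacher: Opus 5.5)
Your proof is correct and follows essentially the same route as the paper. You split into the path edges $\{i,i+1\}$, which realize every gap $d\le r+1$, and the closing edge $\{1,n\}$, which realizes every gap $d\ge n-1$; these two ranges cover all gaps exactly when $r\ge n-3$, and your explicit increasing map for the path-edge case makes precise what the paper only sketches by counting domain points.
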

\begin{proof}
Write $m=n+r$. We first determine, for $1\le a<b\le m$ with
$d:=b-a$, when $\{a,b\}$ is an edge of the graph $G_m$ associated to
$\mathrm{Inc}(\mathbb{N})_{n,m}(I(C_n))$: this holds iff there exist
$\pi\in\mathrm{Inc}(\mathbb{N})_{n,m}$ and an edge $\{i,j\}$ of $C_n$
with $\pi(i)=a,\ \pi(j)=b$.
 
\smallskip
\noindent\emph{Edges from a consecutive pair $\{i,i+1\}$.} Here $\pi$
must place $i-1$ domain points in $[1,a-1]$ and $n-i-1$ domain
points in $[b+1,m]$, with no domain points required strictly between
$a$ and $b$. Such $\pi$ exists iff some $i\in[1,n-1]$ satisfies
$i\le a$ and $i\ge b-r-1$; since $i$ ranges freely over $[1,n-1]$,
this is possible exactly when $d\le r+1$.
 
\smallskip
\noindent\emph{Edges from the wrap edge $\{1,n\}$.} Here $\pi(1)=a$,
$\pi(n)=b$, with no domain points before position $1$ or after
position $n$, and the remaining $n-2$ domain points must fit
strictly between $a$ and $b$. This is possible exactly when
$d\ge n-1$.
 
\smallskip
\noindent Combining the two families, $\{a,b\}\in E(G_m)$ iff
$d\le r+1$ or $d\ge n-1$. Since $1\le d\le m-1$, every gap is
covered by one of these two ranges precisely when they overlap or
meet, i.e.\ when $r+1\ge n-2$, i.e.\ $r\ge n-3$. Under this
hypothesis, every pair $\{a,b\}$ with $1\le a<b\le m$ is an edge of
$G_m$, so $G_m=K_m$ and
$\mathrm{Inc}(\mathbb{N})_{n,n+r}(I(C_n))=I(K_{n+r})$.
\end{proof}
 
\begin{theorem}\label{Cn-CM}
Let $n\ge3$ and $r\ge n-3$. Then
$\mathrm{Inc}(\mathbb{N})_{n,n+r}(I(C_n))$ is Cohen--Macaulay.
\end{theorem}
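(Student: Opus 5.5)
The argument is a direct reduction to the complete-graph case. By Lemma~\ref{prop:Cn-generators}, under the standing hypotheses $n\ge3$ and $r\ge n-3$ we have
\[
\mathrm{Inc}(\mathbb{N})_{n,n+r}(I(C_n)) = I(K_{n+r}),
\]
so it suffices to show that $I(K_m)$ is Cohen--Macaulay for $m=n+r\ge3$. This is exactly Lemma~\ref{lem:Kn-CM}, applied with $m$ in place of $n$. Combining the two lemmas gives the theorem.

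For completeness, I would briefly recall why Lemma~\ref{lem:Kn-CM} holds. The independence complex $\Delta_{K_m}$ consists of the $m$ isolated vertices $\{1\},\dots,\{m\}$, so it is pure of dimension $0$. It is therefore shellable: in any ordering of the facets, each new facet meets the union of the earlier ones in the empty face, which is pure of dimension $-1$. By Proposition~\ref{prop:shell-cm}, shellability implies Cohen--Macaulayness.

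One can also see this algebraically, which gives a useful check. We have $\dim R_m/I(K_m)=\alpha(K_m)=1$. Moreover, $x_1+\cdots+x_m$ is a nonzerodivisor on $R_m/I(K_m)$, since the associated primes are the ideals $(x_j : j\ne i)$, and none of them contains this linear form. Hence the depth is $1$, equal to the dimension.

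The only step that requires care is the identification of the ideal in Lemma~\ref{prop:Cn-generators}. That lemma shows $\{a,b\}$ is an edge exactly when the gap $d=b-a$ satisfies $d\le r+1$ or $d\ge n-1$. These two ranges cover every $d\in[1,m-1]$ precisely when $r\ge n-3$. Since that lemma is already available, no further obstacle remains, and the proof is a two-line combination of the cited results.
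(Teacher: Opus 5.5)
Your proposal is correct and follows the paper's proof exactly: use Lemma~\ref{prop:Cn-generators} to identify $\mathrm{Inc}(\mathbb{N})_{n,n+r}(I(C_n))$ with $I(K_{n+r})$ when $r\ge n-3$, then apply Lemma~\ref{lem:Kn-CM} with $n+r$ in place of $n$. Your extra algebraic check, that $x_1+\cdots+x_m$ is a nonzerodivisor so that depth and dimension both equal $1$, is valid but not needed.
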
 
\begin{proof}
By Lemma~\ref{prop:Cn-generators},
$\mathrm{Inc}(\mathbb{N})_{n,n+r}(I(C_n))=I(K_{n+r})$, which is
Cohen--Macaulay by Lemma~\ref{lem:Kn-CM} (applied with $n+r$ in
place of $n$).
\end{proof}
While Theorem~\ref{Cn-CM} guarantees Cohen--Macaulayness once
$G_{n+r}$ is complete, computational evidence indicates that
Cohen--Macaulayness in fact holds over a considerably wider range of
$r$, failing only at the single value $r=n-4$; we record this
sharper picture as the following conjecture.

\begin{conjecture}\label{conj:Cn-full-CM}
Let $n\ge6$, and let $I_{n+r}:=\mathrm{Inc}(\mathbb{N})_{n,n+r}(I(C_n))$.
Then $I_{n+r}$ is Cohen--Macaulay if and only if
\[
r \;\ge\; \Big\lfloor \frac{n-4}{2} \Big\rfloor
\qquad\text{and}\qquad
r \;\ne\; n-4.
\]
\end{conjecture}

The single excluded value $r=n-4$ has a clean structural explanation.
Writing $m=n+r$, a pair $\{a,b\}$ ($a<b$) fails to be an edge of
$G_{n+r}$ -- and is therefore independent -- exactly when
$r+2\le b-a\le n-2$. At $r=n-4$ this range collapses to the single
integer $r+2=n-2$, and since $m=n+r=2(n-2)$ here, this common value
equals $m/2$: every independent pair is then an exact ``antipodal''
pair $\{k,k+m/2\}$, so the independence complex is a disjoint union
of $m/2$ edges rather than a connected graph. This is the same
phenomenon that forces purity in the line-graph theorem (a valid gap
range collapsing to a single point), but here it produces
\emph{disconnection} instead of connection.

\begin{example}
We illustrate Conjecture~\ref{conj:Cn-full-CM} with $n=9$, for which
the lower bound is $\lfloor(n-4)/2\rfloor=2$ and the completeness
threshold is $n-3=6$.

\begin{center}
\begin{tabular}{c|c|c|l}
$r$ & $m=n+r$ & facet sizes of $\Delta_{G_{n+r}}$ & status \\
\hline
$0$ & $9$  & $\{3,4\}$ & not pure -- \textbf{not CM} \\
$1$ & $10$ & $\{2,3\}$ & not pure -- \textbf{not CM} \\
$2$ & $11$ & $\{2\}$   & pure, connected -- \textbf{CM} \\
$3$ & $12$ & $\{2\}$   & pure, connected -- \textbf{CM} \\
$4$ & $13$ & $\{2\}$   & pure, connected -- \textbf{CM} \\
$5$ & $14$ & $\{2\}$   & pure, \emph{disconnected} ($7$ components) -- \textbf{not CM} \\
$6$ & $15$ & $\{1\}$   & complete graph $K_{15}$ -- \textbf{CM} \\
$7$ & $16$ & $\{1\}$   & complete graph $K_{16}$ -- \textbf{CM} \\
\end{tabular}
\end{center}

At $r=5=n-4$, every independent pair has gap exactly
$r+2=n-2=7=m/2$, giving the perfect matching
\[
\{1,8\},\ \{2,9\},\ \{3,10\},\ \{4,11\},\ \{5,12\},\ \{6,13\},\ \{7,14\}
\]
on $[14]$:

\begin{center}
\begin{tikzpicture}[scale=0.9, every node/.style={circle,draw,minimum size=6mm,inner sep=0pt}]
  \foreach \i/\x in {1/0,2/1.2,3/2.4,4/3.6,5/4.8,6/6.0,7/7.2,
                     8/8.7,9/9.9,10/11.1,11/12.3,12/13.5,13/14.7,14/15.9}
    \node (v\i) at (\x,0) {$\i$};
  \draw[thick] (v1) to[bend left=50] (v8);
  \draw[thick] (v2) to[bend left=50] (v9);
  \draw[thick] (v3) to[bend left=50] (v10);
  \draw[thick] (v4) to[bend left=50] (v11);
  \draw[thick] (v5) to[bend left=50] (v12);
  \draw[thick] (v6) to[bend left=50] (v13);
  \draw[thick] (v7) to[bend left=50] (v14);
\end{tikzpicture}
\\[2pt]
{\small $\Delta_{G_{14}}$ at $r=n-4=5$: seven disjoint edges, each
of gap exactly $m/2=7$}
\end{center}

Every prediction of Conjecture~\ref{conj:Cn-full-CM} matches this
table exactly: CM for $2\le r\le4$, a single failure at $r=5=n-4$,
and CM again from $r=6=n-3$ onward once the graph completes.
\end{example}

\subsection{Complement of cyclic graph under $\mathrm{Inc}(\mathbb{N})$}
 
We now turn to $C_n^c$, the complement of the cyclic graph $C_n$
studied in the previous subsection: two vertices are adjacent in
$C_n^c$ exactly when they are \emph{not} adjacent in $C_n$.
Throughout this subsection, for $n\ge4$ we consider $I_n$ to be the
edge ideal of the complement of the cyclic graph
\[
C_n^c := \big(\, \{i,j\}\ \mid\ 1\le i<j\le n,\ \ 2\le j-i\le n-2 \,\big\},\ [n] \big),
\]
i.e.\ $\{i,j\}$ ($i<j$) is an edge of $C_n^c$ precisely when
$2\le j-i\le n-2$. The restriction $n\ge4$ ensures $C_n^c$ has at
least one edge: for $n=3$ the range $[2,n-2]=[2,1]$ is empty, so
$C_3^c$ is edgeless. As in the previous subsections, we set
\[
I_{n+r} := \mathrm{Inc}(\mathbb{N})_{n,n+r}(I_n),
\]
let $G_{n+r}$ denote the corresponding graph of $I_{n+r}$, and we
first describe the generators of $I_{n+r}$, then compute the height
and unmixedness of the base case $I_n$, and finally characterize
Cohen--Macaulayness of the full chain.
 
\begin{lemma}\label{lem:Cnc-generators}
Let $n\ge4$ and $r\ge0$. Then
\[
\mathrm{Inc}(\mathbb{N})_{n,n+r}(I(C_n^c)) = I(C_{n+r}^c).
\]
\end{lemma}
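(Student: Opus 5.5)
The plan is to prove the two inclusions separately, as in Lemma~\ref{lema:Lnc-generators} and Lemma~\ref{prop:Cn-generators}. Throughout, write $m=n+r$ and, for $1\le a<b\le m$, write $d=b-a$. By definition, $\{a,b\}$ is an edge of $C_m^c$ exactly when $2\le d\le m-2$. For any $\pi\in\mathrm{Inc}(\mathbb{N})_{n,m}$ with $\pi(i)=a$ and $\pi(j)=b$ ($i<j$), strict monotonicity forces three conditions:
\[
i-1\le a-1,\qquad j-i-1\le d-1,\qquad n-j\le m-b.
\]
Conversely, whenever these three counting conditions hold, such a $\pi$ exists: map $1,\dots,i-1$ increasingly into $[1,a-1]$, the points $i+1,\dots,j-1$ into $(a,b)$, and $j+1,\dots,n$ into $(b,m]$.

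For ($\subseteq$), take an edge $\{i,j\}$ of $C_n^c$, so $2\le j-i\le n-2$, and let $a=\pi(i)$, $b=\pi(j)$. Monotonicity gives $d\ge j-i\ge2$, so only the upper bound $d\le m-2$ needs checking. Suppose instead that $d=m-1$. Then $a=1$ and $b=m$, and the first and third counting conditions force $i=1$ and $j=n$. But $\{1,n\}$ has gap $n-1$, so it is not an edge of $C_n^c$, a contradiction. Hence every image pair is an edge of $C_m^c$.

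For ($\supseteq$), fix $a<b$ with $2\le d\le m-2$. I will choose an edge of $C_n^c$ with gap $e=j-i=\min(d,n-2)$, which lies in $[2,n-2]$ automatically, and then look for a starting point $i$ satisfying
\[
\max(1,\; b-e-r)\;\le\; i\;\le\;\min(a,\; n-e).
\]
This interval is exactly the three counting conditions above; the middle condition $e-1\le d-1$ holds because $e\le d$.

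\emph{Case $d\le n-2$.} Then $e=d$, and the lower bound is $\max(1,a-r)$. The inequality $a-r\le n-d$ is equivalent to $b\le m$, and the remaining comparisons ($1\le a$, $1\le n-d$, $a-r\le a$) are immediate, so a valid $i$ exists.

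\emph{Case $d\ge n-1$.} Then $e=n-2$, and $i$ must lie in $\{1,2\}$ with $i\le a$ and $i\ge b-m+2$. If $a\ge2$, take $i=2$; this works because $b\le m$. If $a=1$, take $i=1$; this works because $b=1+d\le m-1$. This last sub-case is the only place where the hypothesis $d\le m-2$ is genuinely used, and it is the step I would check most carefully.

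The main obstacle is not conceptual but bookkeeping: keeping the three counting constraints straight, especially the boundary sub-case $a=1$, which mirrors the forced-endpoint argument used in ($\subseteq$).
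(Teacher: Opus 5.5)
Your proof is correct and follows the paper's route. For $(\subseteq)$ you use the same forced-endpoint argument. For $(\supseteq)$ you pick a domain edge of gap $\min(d,n-2)$, and your split on $a=1$ versus $a\ge2$ in the long-gap case mirrors the paper's split on $b=m$ versus $b\le m-1$. Your explicit interval $\max(1,b-e-r)\le i\le\min(a,n-e)$ also spells out the room-counting step that the paper only sketches for $2\le d\le n-2$.
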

 
\begin{proof}
Write $m=n+r$. We show the two inclusions separately.
 
\smallskip
\noindent($\subseteq$) Every generator of
$\mathrm{Inc}(\mathbb{N})_{n,n+r}(I(C_n^c))$ is $\pi(x_ix_j)=x_ax_b$
for some edge $\{i,j\}$ of $C_n^c$ (so $2\le j-i\le n-2$) and some
$\pi\in\mathrm{Inc}(\mathbb{N})_{n,m}$, with $a=\pi(i)$, $b=\pi(j)$.
Since $\pi$ is strictly increasing, $b-a\ge j-i\ge2$. For the upper
bound: the only pair in $[m]$ with gap $m-1$ is $(1,m)$, and
realizing it would force $\pi(i)=1$, $\pi(j)=m$; since $\pi$ maps
$[n]$ into $[m]$ monotonically, this forces $i=1$ and $j=n$, i.e.\
the domain edge would have to be $\{1,n\}$. But $\{1,n\}$ has gap
$n-1$, which is \emph{not} an edge of $C_n^c$ (it is precisely the
wrap edge of $C_n$ itself). Hence gap $m-1$ is never reached, so
$2\le b-a\le m-2$, i.e.\ $x_ax_b$ is a generator of $I(C_m^c)$.
 
\smallskip
\noindent($\supseteq$) Let $1\le a<b\le m$ with $d:=b-a\in[2,m-2]$;
we exhibit $\pi\in\mathrm{Inc}(\mathbb{N})_{n,m}$ and an edge
$\{i,j\}$ of $C_n^c$ with $\pi(i)=a$, $\pi(j)=b$.
 
\emph{Case $2\le d\le n-2$.} Choose the domain edge $\{i,i+d\}$ of
gap exactly $d$ (valid, as $d\le n-2$), with $i$ chosen so that the
remaining $n-2$ domain points fit into $[1,a-1]$ and $[b+1,m]$; this
is possible by the same room-counting argument used for the analogous
lemma on $L_n^c$.
 
\emph{Case $n-1\le d\le m-2$.} The domain edge of maximum gap $n-2$
is $\{i,i+n-2\}$ for $i\in\{1,2\}$. If $b\le m-1$, take $i=1$; if
$b=m$, take $i=2$ (valid since $d\le m-2$ forces $a=b-d\ge2$ in this
case). Set $j=i+n-2$, $\pi(i)=a$, $\pi(j)=b$, map the $n-3$ interior
domain points increasingly onto any $n-3$ of the $d-1\ge n-2$
integers strictly between $a$ and $b$, map the $i-1$ domain points
before position $i$ increasingly into $[1,a-1]$, and map the $n-j$
domain points after position $j$ increasingly into $[b+1,m]$ (all
possible by the choice of $i$). This $\pi$ is strictly increasing
and $\pi(x_ix_j)=x_ax_b$.
 
In both cases $x_ax_b\in\mathrm{Inc}(\mathbb{N})_{n,n+r}(I(C_n^c))$,
completing the proof.
\end{proof}
 
\begin{proposition}\label{prop:Cnc-height}
Let $n\ge4$. Then $\operatorname{ht}(I(C_n^c))=n-2$, and $I(C_n^c)$
is unmixed.
\end{proposition}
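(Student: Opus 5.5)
The plan mirrors the proof of Proposition~\ref{L_n^cheight}: show $\alpha(C_n^c)=2$ and that every maximal independent set of $C_n^c$ has size exactly $2$. By \eqref{eq:height-vc} this gives $\operatorname{ht}(I(C_n^c))=n-\alpha(C_n^c)=n-2$. It also shows that $\Delta_{C_n^c}$ is pure of dimension $1$, which is unmixedness, since $I(C_n^c)$ is squarefree and hence radical with associated primes equal to its minimal primes.

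The first step is to translate independence in $C_n^c$ into cliques in $C_n$. A subset of $[n]$ is independent in $C_n^c$ exactly when it is a clique in $C_n$. Concretely, a pair $\{a,b\}$ with $a<b$ is independent iff $b-a\notin[2,n-2]$, i.e.\ iff $b-a=1$ or $b-a=n-1$, the latter meaning $\{a,b\}=\{1,n\}$.

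\emph{Lower bound.} The pair $\{1,2\}$ is an edge of $C_n$, hence independent in $C_n^c$, so $\alpha\ge2$.

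\emph{Upper bound.} Suppose $\{i,j,k\}$ with $i<j<k$ is a triangle in $C_n$. Each of the three pairs then has gap $1$ or $n-1$. The pair $\{i,k\}$ has gap at least $2$, so it must be $\{1,n\}$. Then $j-i=1$ and $k-j=1$ give $k-i=2$, so $n-1=2$ and $n=3$, contradicting $n\ge4$. Hence $C_n$ is triangle-free and $\alpha(C_n^c)\le 2$.

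\emph{Purity.} It remains to rule out facets of size $1$, i.e.\ isolated vertices of $\Delta_{C_n^c}$. Every vertex $i$ lies on an edge of the cycle $C_n$: the edge $\{i,i+1\}$ if $i<n$, and $\{n-1,n\}$ (or $\{1,n\}$) if $i=n$. That edge is independent in $C_n^c$, so $\{i\}$ is never maximal. Thus all facets have size $2$, and they are exactly the $n$ edges of $C_n$.

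The only delicate point is the upper bound. One must check that the wrap-around gap $n-1$ cannot combine with consecutive gaps to form a triangle, and this is exactly where the hypothesis $n\ge4$ is used: for $n=3$, $C_3$ is itself a triangle. Everything else is routine. As a by-product, $\Delta_{C_n^c}$ is the cycle $C_n$, which is connected; this will feed directly into the subsequent Cohen--Macaulayness argument via Lemma~\ref{lem:Cnc-generators} and Proposition~\ref{prop:shell-cm}.
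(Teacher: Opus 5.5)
Your proof is correct and follows the paper's argument essentially step for step. Both characterize independent pairs as those with gap $1$ or $n-1$, force $\{i,k\}=\{1,n\}$ in a putative independent triple, get the contradiction from the two consecutive gaps (the paper phrases it as $j=2$ and $j=n-1$ simultaneously), and rule out size-one facets by giving each vertex an independent partner. The one step you leave implicit, that $j-i$ and $k-j$ cannot equal $n-1$ because both are strictly less than $k-i=n-1$, is immediate.
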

 
\begin{proof}
A pair $\{a,b\}$ ($a<b$) is independent in $C_n^c$ iff it is not an
edge, i.e.\ iff $b-a=1$ or $b-a=n-1$; the latter forces $a=1,b=n$.
So every independent pair is either a consecutive pair $\{k,k+1\}$
or the pair $\{1,n\}$ -- exactly the edges of $C_n$.
 
No independent set of size $\ge3$ exists: suppose $\{i,j,k\}$ is
independent with $i<j<k$ in $[n]$. Since $j-i\ge1$ and $k-j\ge1$,
we have $k-i\ge2$; for $\{i,k\}$ to be independent it must then have
gap $n-1$ exactly (the only alternative to a gap in $[2,n-2]$), which
forces $i=1$ and $k=n$. Now $\{1,j\}$ and $\{j,n\}$ must both be
independent with $1<j<n$: since $j-1<n-1$ and $n-j<n-1$, neither gap
can equal $n-1$, so both must equal $1$, forcing $j=2$ and $j=n-1$
simultaneously -- impossible for $n\ge4$. Hence $\alpha(C_n^c)=2$, so
by the height--independence identity,
$\operatorname{ht}(I(C_n^c))=n-\alpha(C_n^c)=n-2$.
 
For unmixedness, every vertex $k\in[n]$ has an independent partner:
if $2\le k\le n-1$, then $\{k-1,k\}$ is independent; if $k=1$, both
$\{1,2\}$ and $\{1,n\}$ are independent; if $k=n$, both $\{n-1,n\}$
and $\{1,n\}$ are independent. So no facet of $\Delta_{C_n^c}$ has
size $1$, and combined with $\alpha(C_n^c)=2$, every facet has size
exactly $2$: $\Delta_{C_n^c}$ is pure, so $I(C_n^c)$ is unmixed.
\end{proof}
 
\begin{theorem}\label{thm:Cnc-CM}
Let $n\ge4$ and $r\ge0$. Then
$\mathrm{Inc}(\mathbb{N})_{n,n+r}(I(C_n^c))$ is Cohen--Macaulay.
\end{theorem}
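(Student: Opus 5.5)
The plan follows the template of Theorem~\ref{thm:Lnc-CM} exactly. Set $m=n+r\ge4$. By Lemma~\ref{lem:Cnc-generators} we have $\mathrm{Inc}(\mathbb{N})_{n,n+r}(I(C_n^c))=I(C_m^c)$. It therefore suffices to show that $I(C_m^c)$ is Cohen--Macaulay for every $m\ge4$, and we will do this by proving that the independence complex $\Delta_{C_m^c}$ is shellable and then invoking Proposition~\ref{prop:shell-cm}.

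First, Proposition~\ref{prop:Cnc-height}, applied with $m$ in place of $n$, gives two facts.
\begin{itemize}
\item $\alpha(C_m^c)=2$.
\item Every facet of $\Delta_{C_m^c}$ has size exactly $2$.
\end{itemize}
Its proof identifies the independent pairs precisely as the consecutive pairs $\{k,k+1\}$ for $1\le k\le m-1$, together with $\{1,m\}$. Hence $\Delta_{C_m^c}$ is pure of dimension $1$, and, viewed as a graph on $[m]$, its edge set is exactly $E(C_m)$. In other words, $\Delta_{C_m^c}$ is the cycle $C_m$ itself, mirroring the statement that the chain reproduces the original graph.

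Second, we check connectedness. For $a<b$ in $[m]$, the facets $\{a,a+1\},\{a+1,a+2\},\dots,\{b-1,b\}$ give a walk from $a$ to $b$ inside $\Delta_{C_m^c}$, using only the path part of $C_m$. Since the complex is pure, one-dimensional and connected, Proposition~\ref{prop:shell-cm} shows it is shellable and hence Cohen--Macaulay. An explicit shelling order is also available if wanted: take $\{1,2\},\{2,3\},\dots,\{m-1,m\},\{1,m\}$. Each new facet meets the union of the earlier ones in a nonempty set of vertices, which is pure of dimension $0$. This holds for the last facet as well, where the intersection is $\{1,m\}$ as two points.

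There is essentially no obstacle once Lemma~\ref{lem:Cnc-generators} and Proposition~\ref{prop:Cnc-height} are granted; the only care needed is applying them with $m=n+r$. The constraint $m\ge4$ is automatic from $n\ge4$. The one point to verify explicitly is that the independent pairs of $C_m^c$ exhaust the facets, so that $\Delta_{C_m^c}$ has no isolated vertices. This is already contained in the unmixedness part of Proposition~\ref{prop:Cnc-height}, so the argument closes by the same Stanley--Reisner correspondence used for $L_n^c$.
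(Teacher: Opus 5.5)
Your proposal is correct and follows the paper's proof essentially step for step. You reduce via Lemma~\ref{lem:Cnc-generators} to $I(C_m^c)$ with $m=n+r$, use Proposition~\ref{prop:Cnc-height} to identify $\Delta_{C_m^c}$ with the cycle $C_m$ as a pure one-dimensional complex, and conclude from connectedness via Proposition~\ref{prop:shell-cm}; your explicit shelling order $\{1,2\},\{2,3\},\dots,\{m-1,m\},\{1,m\}$ is a small addition that the paper leaves implicit.
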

 
\begin{proof}
Set $m=n+r\ge4$. By Lemma~\ref{lem:Cnc-generators},
$\mathrm{Inc}(\mathbb{N})_{n,n+r}(I(C_n^c))=I(C_m^c)$, so it suffices
to show $I(C_m^c)$ is Cohen--Macaulay. By
Proposition~\ref{prop:Cnc-height} (applied with $m$ in place of $n$),
$\Delta_{C_m^c}$ is pure of dimension $1$, with facets exactly the
independent pairs described there: the consecutive pairs $\{k,k+1\}$
for $1\le k\le m-1$, together with the pair $\{1,m\}$. Viewed as a
graph, this facet set is precisely the edge set of $C_m$ itself,
which is connected, being a cycle. Hence $\Delta_{C_m^c}$ is pure and
connected, so shellable, so Cohen--Macaulay. Therefore $I(C_m^c)$ --
and hence $\mathrm{Inc}(\mathbb{N})_{n,n+r}(I(C_n^c))$ -- is
Cohen--Macaulay, for every $n\ge4$ and $r\ge0$.
\end{proof}
 
\begin{remark}
Unlike the cyclic-graph chain of the previous subsection, whose
Cohen--Macaulayness is governed by the delicate threshold of
Conjecture~\ref{conj:Cn-full-CM} (holding on a range of $r$ but
failing at the single value $r=n-4$), the complement-of-cyclic-graph
chain is Cohen--Macaulay \emph{unconditionally}, for every $r\ge0$.
This mirrors the contrast already seen between line graphs and their
complements: in both cases, passing to the complement removes the
threshold phenomenon entirely, because the independence complex of
the complement is literally (a copy of) the original graph itself,
which is connected by construction.
\end{remark}

 \section*{AI Use Declaration}
During the preparation of this manuscript, the authors used Claude
(Anthropic) to assist with formalizing proofs, checking algebraic and
combinatorial computations, drafting and refining LaTeX exposition,
and checking the internal consistency of definitions, lemmas, and
cross-references. The use of the tool was limited to assisting with
these tasks and did not include the independent generation of
research ideas, conjectures, or mathematical results. All mathematical
content, including every theorem and proof, was independently reviewed
and verified by the authors, who take full responsibility for the
correctness and integrity of the manuscript. Claude is not an author
or co-author of this work.\\

{\bf Acknowledgments.} {The first and second authors expressed their heartfelt appreciation to the Graduate Research Assistantship in Developing Countries (GRAID) Program of the IMU/CDC for their invaluable support and assistance. }


\bibliographystyle{amsalpha}
\bibliography{References}

@article{aschenbrenner2007finite,
  title={Finite generation of symmetric ideals, Trans. Am. Math. Soc., 359: 11},
  author={Aschenbrenner, M and Hillar, Ch J},
  year={2007}
}

@article{draisma2010finiteness,
  title={Finiteness for the k-factor model and chirality varieties},
  author={Draisma, Jan},
  journal={Advances in Mathematics},
  volume={223},
  number={1},
  pages={243--256},
  year={2010},
  publisher={Elsevier}
}

@article{cohen1967laws,
  title={On the laws of a metabelian variety},
  author={Cohen, Daniel E},
  journal={Journal of Algebra},
  volume={5},
  number={3},
  pages={267--273},
  year={1967},
  publisher={Academic Press}
}

@article{church2015fi,
  title={FI-modules and stability for representations of symmetric groups},
  author={Church, Thomas and Ellenberg, Jordan S and Farb, Benson},
  year={2015}
}

@article{nagel2019fi,
  title={FI-and OI-modules with varying coefficients},
  author={Nagel, Uwe and R{\"o}mer, Tim},
  journal={Journal of Algebra},
  volume={535},
  pages={286--322},
  year={2019},
  publisher={Elsevier}
}

@article{putman2017representation,
  title={Representation stability and finite linear groups},
  author={Putman, Andrew and Sam, Steven V},
  year={2017}
}

@article{brouwer2011equivariant,
  title={Equivariant Gr{\"o}bner bases and the Gaussian two-factor model},
  author={Brouwer, Andries and Draisma, Jan},
  journal={Mathematics of Computation},
  volume={80},
  number={274},
  pages={1123--1133},
  year={2011}
}

@article{conca2014noetherianity,
  title={Noetherianity up to symmetry},
  author={Conca, Aldo and Di Rocco, Sandra and Draisma, Jan and Huh, June and Sturmfels, Bernd and Viviani, Filippo and Draisma, Jan},
  journal={Combinatorial Algebraic Geometry: Levico Terme, Italy 2013, Editors: Sandra Di Rocco, Bernd Sturmfels},
  pages={33--61},
  year={2014},
  publisher={Springer}
}

@article{draismanoetherianity,
  title={Noetherianity for infinite-dimensional toric varieties. 2013},
  author={Draisma, Jan and Eggermont, Rob H and Krone, Robert and Leykin, Anton},
  journal={Preprint available from http://arxiv. org/abs/1306.0828}
}

@article{hillar2012finite,
  title={Finite Gr{\"o}bner bases in infinite dimensional polynomial rings and applications},
  author={Hillar, Christopher J and Sullivant, Seth},
  journal={Advances in Mathematics},
  volume={229},
  number={1},
  pages={1--25},
  year={2012},
  publisher={Elsevier}
}

@article{hocsten2007finiteness,
  title={A finiteness theorem for Markov bases of hierarchical models},
  author={Ho{\c{s}}ten, Serkan and Sullivant, Seth},
  journal={Journal of Combinatorial Theory, Series A},
  volume={114},
  number={2},
  pages={311--321},
  year={2007},
  publisher={Elsevier}
}

@article{nagel2017equivariant,
  title={Equivariant Hilbert series in non-noetherian polynomial rings},
  author={Nagel, Uwe and R{\"o}mer, Tim},
  journal={Journal of Algebra},
  volume={486},
  pages={204--245},
  year={2017},
  publisher={Elsevier}
}

@article{le2021castelnuovo,
  title={Castelnuovo--Mumford regularity up to symmetry},
  author={Le, Dinh Van and Nagel, Uwe and Nguyen, Hop D and R{\"o}mer, Tim},
  journal={International Mathematics Research Notices},
  volume={2021},
  number={14},
  pages={11010--11049},
  year={2021},
  publisher={Oxford University Press}
}

@article{le2020codimension,
  title={Codimension and projective dimension up to symmetry},
  author={Le, Dinh Van and Nagel, Uwe and Nguyen, Hop D and R{\"o}mer, Tim},
  journal={Mathematische Nachrichten},
  volume={293},
  number={2},
  pages={346--362},
  year={2020},
  publisher={Wiley Online Library}
}

@article{almousa2022alexander,
  title={Alexander duals of symmetric simplicial complexes and Stanley-Reisner Ideals},
  author={Almousa, Ayah and Bruegge, Kaitlin and Juhnke-Kubitzke, Martina and Nagel, Uwe and Pevzner, Alexandra},
  journal={arXiv preprint arXiv:2209.14132},
  year={2022}
}

@article{murai2020betti,
  title={Betti tables of monomial ideals fixed by permutations of the variables},
  author={Murai, Satoshi},
  journal={Transactions of the American Mathematical Society},
  volume={373},
  number={10},
  pages={7087--7107},
  year={2020}
}

@article{hoa2024asymptotic,
  title={Asymptotic depth of invariant chains of edge ideals},
  author={Hoa, Tran Quang and Hoang, Do Trong and Van Le, Dinh and Nguyen, Hop D and Nguyen, Thai Thanh},
  journal={arXiv preprint arXiv:2409.06252},
  year={2024}
}

@article{van2024regularity,
  title={On Regularity and Projective Dimension of Invariant Chains of Monomial Ideals},
  author={Van Le, Dinh and Nguyen, Hop D},
  journal={Michigan Mathematical Journal},
  volume={1},
  number={1},
  pages={1--22},
  year={2024},
  publisher={University of Michigan, Department of Mathematics}
}

@article{hoang2024asymptotic,
  title={Asymptotic regularity of invariant chains of edge ideals},
  author={Hoang, Do Trong and Nguyen, Hop D and Tran, Quang Hoa},
  journal={Journal of Algebraic Combinatorics},
  volume={59},
  number={1},
  pages={55--94},
  year={2024},
  publisher={Springer}
}

@article{kahle2022invariant,
  title={Invariant chains in algebra and discrete geometry},
  author={Kahle, Thomas and Le, Dinh Van and Römer, Tim},
  journal={SIAM Journal on Discrete Mathematics},
  volume={36},
  number={2},
  pages={975--999},
  year={2022},
  publisher={SIAM}
}

@article{hoang2026invariant,
  title={Invariant chains of graphs},
  author={Hoang, Do Trong and Koley, Mitra and Van Le, Dinh},
  journal={arXiv preprint arXiv:2608.17354},
  year={2026}
}

@article{juhnke2018asymptotic,
  title={Asymptotic behavior of symmetric ideals: a brief survey},
  author={Juhnke-Kubitzke, Martina and Le, Dinh Van and R{\"o}mer, Tim},
  journal={National School on Algebra},
  pages={73--94},
  year={2018},
  publisher={Springer}
}
\end{document}